\documentclass[12pt]{article}
\pdfoutput=1

\usepackage[utf8]{inputenc}
\usepackage{geometry}
\usepackage{amsmath,amsfonts,amsthm,amscd,upref,amstext, amssymb}
\usepackage{verbatim}
\usepackage[shortlabels]{enumitem}
\usepackage{xcolor}

\geometry{textwidth=6in, textheight=9in, marginparsep=7pt, marginparwidth=.6in, top=30mm, bottom=25mm}

\usepackage[
backend=biber,
style=alphabetic,
]{biblatex}
\title{A bibLaTeX example}
\addbibresource{bib.tex} 
\usepackage{url}
\setcounter{biburllcpenalty}{7000}
\setcounter{biburlucpenalty}{8000}

\newtheorem{theorem}{Theorem}[section]
\newtheorem{corollary}[theorem]{Corollary}
\newtheorem{lemma}[theorem]{Lemma}
\newtheorem{claim}[theorem]{Claim}
\newtheorem{subclaim}[theorem]{Subclaim}
\newtheorem{fact}[theorem]{Fact}
\newtheorem{definition}[theorem]{Definition}
\newtheorem{conjecture}[theorem]{Conjecture}

\newcommand\I{\mathcal{I}}
\newcommand\J{\mathcal{J}}

\renewcommand\H{\mathcal{H}}

\newcommand\T{\mathcal{T}}
\newcommand\R{\mathbb{R}}

\newcommand\Gam{\bf{\Gamma}}

\usepackage{fancyhdr}
\usepackage{lipsum} 

\title{Unreachability of $\bf{\Gamma_{2n+1,m}}$}
\author{Derek Levinson}
\date{\today}

\begin{document}
\maketitle

\abstract{\noindent
We find bounds for the maximal length of a sequence of distinct $\bf{\Gamma_{2n+1,m}}$-sets under $AD$ and show there is no sequence of distinct $\bf{\Gamma_{2n+1}}$-sets of length $\bf{\delta^1_{2n+3}}$. As a special case, there is no sequence of distinct $\bf{\Gamma_{1,m}}$-sets of length $\aleph_{m+2}$. These are the optimal results for the pointclasses $\bf{\Gamma_{2n+1}}$ and $\bf{\Gamma_{1,m}}$.

\section{Introduction}
\label{Intro}

We will assume $ZF + AD + DC$ throughout this paper. A goal of descriptive set theory since its early days was to identify, for each natural (boldface) pointclass $\bf{\Gamma}$, the supremum of the lengths of prewellorderings in $\bf{\Gamma}$. In the 1980s, the work of Moschovakis, Kechris, Martin, Jackson, and others culminated in a computation of $\bf{\delta}^1_n$ = $sup\{|\leq^*| : \,\leq^* \text{ is a prewellordering in } \bf{\Delta^1_n}\}$ for each $n\in \omega$ (see \cite{mosdst} and \cite{jackson_2011}). A related question is the supremum of the lengths of sequences of distinct sets from $\bf{\Gamma}$. This has also been resolved for the pointclasses in the projective hierarchy, albeit much more recently (see \cite{hra}). But there remained many interesting pointclasses strictly between $\bf{\Sigma^1_{2n}}$ and $\bf{\Delta^1_{2n+1}}$ in the Wadge hierarchy for which the problem was unsolved. In this paper we study the lengths of sequences of distinct sets from some of these pointclasses:

\begin{definition}
    For  $n\in\omega\backslash\{0\}$, $m\in\omega$, $k\in\omega\backslash\{0\}$, and $x_0\in\R$, the pointclass $\Gamma_{n,m,k}(x_0)$ is the set of $A\subseteq\R$ such that there exists a $\Sigma_k$-formula $\psi$ satisfying
    \begin{align*}
        (\forall y\in \R)\, y\in A \longleftrightarrow M_{n-1}\models \psi(y,x_0,s_m),
    \end{align*}
    where $s_m = (u_0,...,u_{m-1})$ is the sequence of the first $m$ uniform indiscernibles. Define $\bf{\Gamma_{n,m,k}}$$ = \bigcup_{x_0\in\R} \Gamma_{n,m,k}(x_0)$, $\bf{\Gamma_{n,m}}$ = $\bigcup_{k\in\omega} \bf{\Gamma_{n,m,k}}$, and $\bf{\Gamma_n}$ = $\bigcup_{m\in \omega} \bf{\Gamma_{n,m}}$.
\end{definition}

$\bf{\Gamma_n}$ is the envelope of $\bf{\Pi^1_{n+1}}$ --- the smallest boldface pointclass which contains scales for universal $\bf{\Sigma^1_{n+1}}$ and $\bf{\Pi^1_{n+1}}$ sets.  In particular, if $n$ is odd the optimal scale on a universal $\bf{\Pi^1_{n+1}}$-set is cofinal in $\bf{\Gamma_n}$. Similarly, if $n$ is even the optimal scale on a universal $\bf{\Sigma^1_{n+1}}$-set is cofinal in $\bf{\Gamma_n}$.

There is an alternative characterization of $\bf{\Gamma_n}$ in purely descriptive set-theoretic terms using the difference hierarchy. We say $A\subseteq \R$ is $\alpha-\bf{\Pi^1_1}$ if there is a sequence $\langle A_\beta : \beta < \alpha\rangle\subset \bf{\Pi^1_1}$ such that $x\in A \longleftrightarrow min\{\beta : \beta = \alpha \vee x \notin A_\beta\}$ is odd. Then $\bf{\Gamma_n}$ $ = \bigcup_{m<\omega} \Game^n (\omega\cdot m - \bf{\Pi^1_1})$.

\begin{definition}
    Let $a_{n,m} = sup\{|\leq^*|:\,\leq^*\text{ is a prewellordering in }\bf{\Gamma_{n,m}}\}$.
\end{definition}

\begin{definition}
    Suppose $P$ is a complete iterate of $M_n(x)$ and $k>0$. Define
    \begin{align*}
        &\gamma^P_{m,k}= sup(Hull^P_k(u_0,...,u_{m-1})\cap \delta_P),\\
        &\gamma^n_{m,k,x,\infty} = \pi_{M_n(x),\infty}(\gamma^{M_n(x)}_{m,k}),\\
        &b_{n,m,k} = sup_{x\in\R} \gamma^n_{m,k,x,\infty},\text{ and}\\
        &b_{n,m} = sup_{k\in\omega} b_{n,m,k},
    \end{align*}
    where $\delta_P$ is the least Woodin cardinal of $P$ and $\pi_{M_n(x),\infty}$ is the direct limit embedding from $M_n(x)$ into the direct limit of all countable iterates of $M_n(x)$ by its unique iteration strategy.
\end{definition}

Hjorth computed $a_{1,m}$ and $b_{1,m}$ for every $m\in \omega$ (see \cite{bddness_lemma}):

\begin{theorem}[Hjorth]
\label{hjorth pwo thm}
    $a_{1,m} = b_{1,m} = u_{m+1}$.\footnote{We are writing $u_m$ for the $m+1$st-uniform indiscernible. This is off by one from the notation of \cite{bddness_lemma}.}
\end{theorem}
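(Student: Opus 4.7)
The plan is to establish the chain $a_{1,m} \leq b_{1,m} \leq u_{m+1} \leq a_{1,m}$, handling each inequality separately.

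For the first inequality $a_{1,m} \leq b_{1,m}$ (a boundedness lemma), let $\leq^*$ be a prewellordering in $\Gamma_{1,m,k}(x_0)$, defined by a $\Sigma_k$-formula $\psi$ over $M_0$ with parameters $x_0$ and $s_m$. For each real $y$, the rank $|y|_{\leq^*}$ is an ordinal $\Sigma_k$-definable over $L[y,x_0]$ from the uniform indiscernibles. Packaging $(y,x_0)$ into a single real $z$ and comparing $L[z]$ with iterates of $M_1(z)$---using that the $u_i$ are indiscernibles for every $L[w]$ and are preserved by the relevant comparison maps---one shows that $|y|_{\leq^*}$ lies in $\pi_{M_1(z),\infty}\bigl[Hull^{M_1(z)}_k(u_0,\ldots,u_{m-1}) \cap \delta_{M_1(z)}\bigr]$. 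Taking the sup over $z$ and $k$ gives $|\leq^*| \leq b_{1,m}$.

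For $b_{1,m} \leq u_{m+1}$, every ordinal in $\pi_{M_1(x),\infty}\bigl[Hull^{M_1(x)}_k(u_0,\ldots,u_{m-1})\bigr]$ is $\Sigma_k$-definable in the direct limit from $u_0,\ldots,u_{m-1}$; by the indiscernibility of the uniform indiscernibles for the direct limit, such an ordinal lying below the image of $\delta_{M_1(x)}$ is bounded by $u_{m+1}$. For the lower bound $u_{m+1} \leq a_{1,m}$, I would invoke Kunen's formula $u_{m+1} = \sup_{x \in \R} \aleph_{m+1}^{L[x]}$: for every $\alpha < u_{m+1}$ there is a real $x$ with $\alpha < \aleph_{m+1}^{L[x]}$, and the canonical $L[x]$-wellordering of ordinals below $\alpha$, transported to a wellordering of reals via a $\Sigma_k$-definable coding using $s_m$ as parameters, yields a prewellordering of length $\alpha$ in $\bf{\Gamma_{1,m}}$.

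The main obstacle is the first step. Aligning the $\Sigma_k$-definability over $L[y,x_0]$ used in the definition of $\bf{\Gamma_{1,m,k}}$ with the $\Sigma_k$-definability over iterates of $M_1(z)$ needed to land inside the relevant hull, and simultaneously verifying that the uniform indiscernibles are transported correctly by the comparison maps, is the technical heart of Hjorth's argument. The interplay between the one-Woodin model $M_1$ (which witnesses the sharp bound via its Woodin cardinal) and the no-Woodin model $L[y,x_0]$ (over which $\bf{\Gamma_{1,m}}$ is defined) is the most delicate aspect, and getting the comparison to respect the indiscernible parameters is what ultimately makes the bound $u_{m+1}$ sharp rather than something strictly larger.
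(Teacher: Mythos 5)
First, for calibration: the paper does not prove this theorem at all---it is quoted from Hjorth's \cite{bddness_lemma} with only a citation---so there is no internal argument to measure your sketch against. Your decomposition into $a_{1,m}\leq b_{1,m}\leq u_{m+1}\leq a_{1,m}$ is a reasonable way to organize the result, but two of the three steps have genuine problems.

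For the lower bound you invoke ``Kunen's formula $u_{m+1}=\sup_{x\in\R}\aleph_{m+1}^{L[x]}$,'' which is false: under $AD$ each $\aleph_{m+1}^{L[x]}$ is a countable ordinal, so that supremum is $\aleph_1=u_0$, not $u_{m+1}=\aleph_{m+2}$. The correct statement is $u_{m+1}=\sup_{x\in\R}\bigl((u_m)^+\bigr)^{L[x]}$, and even after this correction the construction is not routine: to convert an $L[x]$-wellordering of $u_m$ of type $\alpha$ into a prewellordering of the reals lying in $\bf{\Gamma_{1,m}}$ you must code ordinals below $u_m$ by reals via Kunen's representation (every $\beta<u_m$ has the form $f(u_0,\dots,u_{m-1})$ for some $f\in L[y]$) and then check that the induced comparison is $\Sigma_k$ over $L[y_1,y_2,x]$ from the parameters $s_m$ alone; none of this is in the sketch. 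For the inequality $a_{1,m}\leq b_{1,m}$, your opening claim that the rank $|y|_{\leq^*}$ is $\Sigma_k$-definable over $L[y,x_0]$ from the indiscernibles is not true: what is locally definable is the relation $y_1\leq^* y_2$ (over $L[y_1,y_2,x_0]$), whereas the rank of $y$ is a global quantity depending on all reals. Bounding that global rank by an ordinal in the image of a definable hull of an iterate of $M_1$ is exactly the content of Hjorth's boundedness lemma; you correctly identify this as the ``technical heart,'' but that means the essential idea is flagged rather than supplied. (Within this paper's own framework, $a_{1,m}\leq b_{1,m}$ is the $n=0$ instance of Corollary \ref{cor on pwo}, obtained from unreachability via the observation that a prewellordering of length $\lambda$ yields $\lambda$ distinct sections---a different route from a direct boundedness argument.) The middle inequality $b_{1,m}\leq u_{m+1}$ is argued along plausible lines, though the preservation of the uniform indiscernibles under the direct limit maps is asserted rather than checked.
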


In fact, Hjorth's arguments also show $b_{1,m,k} = u_{m+1}$ for any $m\in \omega$ and $k\in\omega\backslash\{0\}$. Under $AD$, $u_m = \aleph_{m+1}$. In \cite{on_the_pwo}, Sargsyan extends the analysis of \cite{bddness_lemma} to the rest of the pointclasses in the projective hierarchy.

The value of $b_{2n+1,m}$ is unknown for $n>0$, but it is known that $b_{2n+1,m} < \kappa^1_{2n+3}$ for all $m\in\omega$ and $sup_{m<\omega} b_{2n+1,m} = \kappa^1_{2n+3}$, where $\kappa^1_{2n+3}$ is the Suslin cardinal satisfying $S(\kappa^1_{2n+3}) = \bf{\Sigma^1_{2n+3}}$.
The proof of 
Lemma 6.1 of \cite{on_the_pwo} gives:\footnote{A cautionary note: Sargsyan defines $b_{m,n}$ differently in \cite{on_the_pwo} than we do above. But his main results still hold with our definition.}
\begin{theorem}
\label{b is cardinal}
    $b_{2n+1,m,k}$ is a cardinal.
\end{theorem}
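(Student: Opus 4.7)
The plan is to show that for each real $x$, the ordinal $\gamma^{2n+1}_{m,k,x,\infty}$ is a cardinal of $V$. Granting this, $b_{2n+1,m,k}$ is a cardinal as a supremum of cardinals: any $\mu < b_{2n+1,m,k}$ admitting a surjection onto $b_{2n+1,m,k}$ would yield, by composition with the inclusion $\gamma^{2n+1}_{m,k,x,\infty} \hookrightarrow b_{2n+1,m,k}$, a surjection $\mu \to \gamma^{2n+1}_{m,k,x,\infty}$ for any $x$ with $\mu < \gamma^{2n+1}_{m,k,x,\infty}$, contradicting cardinality of that ordinal in $V$.

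The crux is the claim that $\gamma^P_{m,k}$ is a cardinal of $P$ whenever $P$ is a complete iterate of $M_{2n+1}(x)$. Let $H = Hull^P_k(u_0,\ldots,u_{m-1})$ and $\gamma = \sup(H \cap \delta_P)$. For any $\alpha \in H \cap \delta_P$, the $P$-cardinal successor $(\alpha^+)^P$ is definable from $\alpha$ at a complexity absorbed by the $\Sigma_k$-closure of the hull, so $(\alpha^+)^P \in H$. Since $\delta_P$ is Woodin in $P$, hence $P$-inaccessible, $(\alpha^+)^P < \delta_P$, whence $(\alpha^+)^P \in H \cap \delta_P$ and $(\alpha^+)^P \leq \gamma$. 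Letting $\alpha$ range over $H \cap \gamma$, which is cofinal in $\gamma$ by the definition of the supremum, the ordinals $(\alpha^+)^P$ form a cofinal subset of $\gamma$ consisting of $P$-cardinals. Thus $\gamma$ is a limit, and therefore itself a cardinal, of $P$.

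Next, I apply the iteration map $\pi := \pi_{M_{2n+1}(x),\infty}$. Full elementarity of $\pi$ sends the $M_{2n+1}(x)$-cardinal $\gamma^{M_{2n+1}(x)}_{m,k}$, which lies below $\delta_{M_{2n+1}(x)}$, to a cardinal $\gamma^{2n+1}_{m,k,x,\infty} = \pi(\gamma^{M_{2n+1}(x)}_{m,k})$ of the direct limit, lying below $\pi(\delta_{M_{2n+1}(x)})$. Fullness of the direct limit of countable iterates of $M_{2n+1}(x)$, standard under $AD$ at this level, implies that cardinals of this direct limit below its least Woodin coincide with cardinals of $V$. This promotes $\gamma^{2n+1}_{m,k,x,\infty}$ to a $V$-cardinal and completes the reduction.

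The main obstacle is the mouse-theoretic step verifying $(\alpha^+)^P \in H$ at the relevant hull complexity $k$. Concretely, one must check that $\Sigma_k$-hulls in the fine-structural presentation of $M_{2n+1}(x)$ are closed under the cardinal-successor function below $\delta_P$, which follows from the coherence and soundness of these mice; for small $k$ this may require appeal to the fine-structural Skolem functions specific to the $M_{2n+1}(x)$ hierarchy rather than naive $\Sigma_k$-definability. Once this closure is granted, the propagation of cardinality through $\pi$ and the transfer to $V$ by fullness are routine features of the $M_{2n+1}(x)$ analysis.
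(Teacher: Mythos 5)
The decisive step of your argument --- that ``cardinals of the direct limit of countable iterates of $M_{2n+1}(x)$ below its least Woodin coincide with cardinals of $V$'' --- is false, and with it the intermediate claim that each $\gamma^{2n+1}_{m,k,x,\infty}$ is a cardinal of $V$. The direct limit is a fine-structural, GCH-satisfying, $L$-like model that thinks its least Woodin is inaccessible, so it has unboundedly many cardinals below that Woodin, and their pattern cannot agree with the cardinal structure of $V$. The case $n=0$ makes this concrete and refutes your intermediate claim outright: by Hjorth's results (Theorem \ref{hjorth pwo thm} and the remark following it), $b_{1,m,k}=u_{m+1}=\aleph_{m+2}$ is the supremum, not attained, of the ordinals $\gamma^{1}_{m,k,x,\infty}$; hence cofinally many of these ordinals lie strictly between $\aleph_{m+1}$ and $\aleph_{m+2}$ and so are not $V$-cardinals. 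Fullness of the direct limit gives correctness about mice and ordinal-definability computations, not agreement with $V$ about cardinals below the Woodin. (Your first step, that $\gamma^P_{m,k}$ is a cardinal of $P$ because the relevant hull is closed under the $P$-cardinal-successor function, is reasonable as far as it goes, but it only delivers that $\gamma^{2n+1}_{m,k,x,\infty}$ is a cardinal \emph{of the direct limit}.)

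The actual content of the theorem is that the supremum over all reals $x$ is a cardinal even though the individual terms are not, so any correct proof must use the quantification over $x$, which your argument never touches. The proof the paper points to (Lemma 6.1 of \cite{on_the_pwo}, in the tradition of Hjorth's boundedness lemma) runs along these lines: assume $f:\lambda\to b_{2n+1,m,k}$ is a surjection with $\lambda<b_{2n+1,m,k}$, use the coding lemma to code $f$ by a set of reals in the pointclass naturally attached to the directed system, and then use boundedness together with the uniform definability of the direct-limit maps (as in Fact \ref{direct limit approx definable}) to absorb the range of $f$ into $\gamma^{2n+1}_{m,k,x,\infty}$ for a single real $x$, contradicting that these ordinals are strictly increasing and cofinal in $b_{2n+1,m,k}$. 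A repair of your argument would have to replace the false transfer of cardinality from the direct limit to $V$ by an argument of this boundedness type.
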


We now turn from prewellorderings to the lengths of sequences of distinct sets from a pointclass.

\begin{definition}
    For a pointclass $\bf{\Gamma}$ and a cardinal $\kappa$, we say $\kappa$ is $\bf{\Gamma}$-reachable if there is a sequence of distinct $\bf{\Gamma}$-sets of length $\kappa$. Otherwise, we say $\kappa$ is $\bf{\Gamma}$-unreachable.
\end{definition}

The minimal $\bf{\Gamma}$-unreachable cardinal is at least $\bf{\delta_\Gamma}$. In \cite{Kechris}, Kechris proved $\bf{\delta^1_{2n+2}}$ is $\bf{\Delta^1_{2n+1}}$-unreachable and conjectured that it is also $\bf{\Sigma^1_{2n+2}}$-unreachable. Jackson made progress towards this conjecture in \cite{Jackson}:

\begin{theorem}[Jackson]
    $\bf{\delta^1_{2n+2}}$ is $\bf{\Delta^1_{2n+2}}$-unreachable.
\end{theorem}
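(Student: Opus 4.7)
The plan is to argue by contradiction. Suppose $\langle A_\alpha : \alpha < \bf{\delta^1_{2n+2}}\rangle$ is a sequence of distinct $\bf{\Delta^1_{2n+2}}$-subsets of $\R$. The target contradiction is with the Kunen--Martin theorem at level $2n+2$: by second periodicity, $\bf{\Pi^1_{2n+1}}$ has the scale property, so every $\bf{\Sigma^1_{2n+2}}$-set is $\bf{\delta^1_{2n+1}}$-Suslin, and hence every wellfounded $\bf{\Sigma^1_{2n+2}}$-relation on $\R$ has rank strictly less than $(\bf{\delta^1_{2n+1}})^+ = \bf{\delta^1_{2n+2}}$. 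My goal is therefore to manufacture from $\langle A_\alpha\rangle$ a wellfounded $\bf{\Sigma^1_{2n+2}}$-relation on $\R$ of rank $\bf{\delta^1_{2n+2}}$.

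First I would fix universal sets $U^\Sigma \in \bf{\Sigma^1_{2n+2}}$ and $U^\Pi \in \bf{\Pi^1_{2n+2}}$ and set
\[
K = \{(u,v) \in \R^2 : U^\Sigma_u = \R \setminus U^\Pi_v\}, \qquad (u,v) \equiv (u',v') \iff U^\Sigma_u = U^\Sigma_{u'}.
\]
Then $K$ is $\bf{\Pi^1_{2n+2}}$ and $\equiv$ is a $\bf{\Pi^1_{2n+2}}$-equivalence relation on $K$, and the hypothetical sequence induces an abstract injection $\iota : \bf{\delta^1_{2n+2}} \hookrightarrow K/\!\equiv$, sending $\alpha$ to the $\equiv$-class of matched code pairs for $A_\alpha$. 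Next, using the scale property for $\bf{\Sigma^1_{2n+2}}$ (from second periodicity) together with Moschovakis' coding lemma, I would build a $\bf{\Sigma^1_{2n+2}}$-set $S \subseteq K$ picking a representative of each $\equiv$-class in the image of $\iota$, together with a $\bf{\Sigma^1_{2n+2}}$-norm $\rho : S \to \bf{\delta^1_{2n+2}}$ that recovers the index $\alpha$ from the chosen code for $A_\alpha$. The strict order $\rho(c) < \rho(c')$ would then be a wellfounded $\bf{\Sigma^1_{2n+2}}$-relation of rank $\bf{\delta^1_{2n+2}}$, yielding the Kunen--Martin contradiction.

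The hard part will be producing $\rho$. By Kunen--Martin itself, no single $\bf{\Sigma^1_{2n+2}}$-norm on a $\bf{\Sigma^1_{2n+2}}$-set has length $\bf{\delta^1_{2n+2}}$, so $\rho$ cannot arise from one application of the coding lemma; it must be glued from a cofinal family of shorter $\bf{\Sigma^1_{2n+2}}$-norms, aligned using $\iota$. The central challenge is converting the \emph{abstract} distinctness of the $A_\alpha$ (asserted by hypothesis but not a priori definable) into a \emph{definable} $\bf{\Sigma^1_{2n+2}}$-rank of length exactly $\bf{\delta^1_{2n+2}}$. A likely route is to iterate the coding lemma along an increasing cofinal sequence of $\bf{\Sigma^1_{2n+2}}$-norms on $K$, using the distinctness hypothesis at each stage to ensure the spliced rank remains $\bf{\Sigma^1_{2n+2}}$. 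Executing this splicing cleanly is the technical heart of Jackson's argument.
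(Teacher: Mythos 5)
First, note that the paper does not prove this statement: it is quoted as background, attributed to Jackson with a citation, so there is no in-paper argument to compare yours against. Judged on its own terms, your proposal has a genuine gap, and it sits exactly where you flag it. Your target contradiction is sound --- $\bf{\Sigma^1_{2n+2}}$ sets are $\bf{\delta^1_{2n+1}}$-Suslin and $(\bf{\delta^1_{2n+1}})^+ = \bf{\delta^1_{2n+2}}$, so by Kunen--Martin no wellfounded $\bf{\Sigma^1_{2n+2}}$ relation has rank $\bf{\delta^1_{2n+2}}$ --- but the object you propose to build in order to reach that contradiction cannot be produced by the tools you invoke. The coding lemma selects codes relative to a given prewellordering and returns a set in (roughly) the pointclass $\exists^{\R}(\bf{\Gamma}\wedge\bf{\check\Gamma})$ where $\bf{\Gamma}$ is the pointclass of that prewellordering. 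To index all $\bf{\delta^1_{2n+2}}$-many classes at once you need a prewellordering of length $\bf{\delta^1_{2n+2}}$, and by the very Kunen--Martin bound you are targeting, no such prewellordering is $\bf{\Sigma^1_{2n+2}}$; so a single application of the coding lemma lands you at $\bf{\Sigma^1_{2n+3}}$, one level too high. Your fallback --- splicing a cofinal family of shorter $\bf{\Sigma^1_{2n+2}}$-norms --- has no mechanism behind it: $\bf{\delta^1_{2n+2}}$ is regular, so a cofinal family has size $\bf{\delta^1_{2n+2}}$, and a union (or any naive amalgam) of that many $\bf{\Sigma^1_{2n+2}}$ sets need not be projective at all. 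The hypothesis gives only an \emph{abstract} sequence of distinct sets; it contributes no definability to the sequence itself, so "the distinctness hypothesis at each stage" cannot force the spliced rank to remain $\bf{\Sigma^1_{2n+2}}$. In short, the "technical heart" you defer is not a technicality: it is the whole theorem, and the route you sketch runs directly into the obstruction it is trying to exploit.

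It is also worth saying that the known arguments for results of this type run in the opposite direction. Rather than building an overlong wellfounded relation at the same level, one injects the hypothetical sequence $\langle A_\alpha : \alpha<\bf{\delta^1_{2n+2}}\rangle$ into a collection of "small codes" of cardinality strictly less than $\bf{\delta^1_{2n+2}}$ and derives the contradiction by counting: in this paper and in the Hjorth--Sargsyan proofs the codes are (sets of) conditions in the extender algebra of a direct limit of mice below a suitable strong cardinal, while Jackson's argument at the level of $\bf{\Delta^1_{2n+2}}$ exploits the Suslin and co-Suslin representations of $\bf{\Delta^1_{2n+2}}$ sets by trees on $\omega\times\bf{\delta^1_{2n+1}}$ together with the measure and partition-property analysis of $\bf{\delta^1_{2n+1}}$. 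The coding lemma does appear in these proofs, but only to make the assignment $\alpha\mapsto(\text{code of }A_\alpha)$ sufficiently definable \emph{at a higher level} so that the counting can be carried out; it is never asked to produce a $\bf{\Sigma^1_{2n+2}}$ object of length $\bf{\delta^1_{2n+2}}$. If you want to pursue a proof yourself, I would redirect your effort toward a counting argument of that shape rather than toward the splicing construction.
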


Hjorth and Sargsyan resolved Kechris's conjecture:

\begin{theorem}[Hjorth]
\label{hjorth unreachability thm}
    $\bf{\delta^1_2}$ is $\bf{\Sigma^1_2}$-unreachable.
\end{theorem}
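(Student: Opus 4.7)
The plan is a proof by contradiction: assuming $\langle A_\alpha : \alpha < \bf{\delta^1_2}\rangle$ is a sequence of distinct $\bf{\Sigma^1_2}$-sets, I will extract a $\bf{\Sigma^1_2}$-wellfounded relation on $\R$ of rank at least $\bf{\delta^1_2}$, contradicting the Kunen--Martin theorem. Under $AD$, $\bf{\delta^1_2} = \omega_2$ and every $\bf{\Sigma^1_2}$-set is $\omega_1$-Suslin (via the $\bf{\Pi^1_1}$-scale furnished by Moschovakis's First Periodicity), so any $\bf{\Sigma^1_2}$-wellfounded relation on $\R$ has rank strictly less than $\omega_1^+ = \omega_2 = \bf{\delta^1_2}$.

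For the construction, I would fix a universal $\bf{\Sigma^1_2}$-set $U \subseteq \R \times \R$ and set $K_\alpha = \{c : U_c = A_\alpha\}$, a nonempty $\bf{\Sigma^1_2}$-set. Since we cannot simultaneously choose representatives $c_\alpha \in K_\alpha$ under $AD$, I would instead invoke Moschovakis's Coding Lemma: given a $\bf{\Delta^1_2}$-prewellordering $\leq^*$ of $\R$ of length $\lambda < \bf{\delta^1_2}$ and any $R \subseteq \lambda \times \R$, there is a $\bf{\Sigma^1_2}$-set uniformly representing $R$. Applied to $R_\lambda = \{(\alpha,c) : \alpha < \lambda,\, c \in K_\alpha\}$, this yields a $\bf{\Sigma^1_2}$-set $C_\lambda \subseteq \R \times \R$ selecting a code from $K_\alpha$ for each $\alpha < \lambda$. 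From $C_\lambda$ one reads off a $\bf{\Sigma^1_2}$-wellfounded relation $\prec_\lambda$ of rank $\lambda$, by declaring $x \prec_\lambda y$ iff $x$ and $y$ are $C_\lambda$-selected codes whose corresponding indices in $\leq^*$ are strictly ordered. Dovetailing these $\prec_\lambda$ over a cofinal sequence of $\lambda$'s below $\bf{\delta^1_2}$ then produces a single $\bf{\Sigma^1_2}$-wellfounded relation of rank $\geq \bf{\delta^1_2}$, the desired contradiction.

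The main obstacle is making this dovetailing work at uniform complexity. A priori each $C_\lambda$ is $\bf{\Sigma^1_2}$ only relative to real parameters that may depend on the (abstract, possibly undefinable) sequence and on $\lambda$, and without uniform bounds on those parameters the dovetailed relation need not itself be $\bf{\Sigma^1_2}$. Hjorth's route around this obstacle is the boundedness lemma of \cite{bddness_lemma}, which stratifies codes by their values at the uniform indiscernibles $u_1$ and $u_2$ and shows that any collection of fewer than $u_2 = \bf{\delta^1_2}$ many distinct $\bf{\Sigma^1_2}$-sets admits codes of uniformly bounded complexity. Leveraging this uniformity to force the Kunen--Martin obstruction to fire is the heart of the proof.
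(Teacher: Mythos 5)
Your reduction to the Kunen--Martin theorem is set up correctly as far as it goes ($\bf{\Sigma^1_2}$ sets are $\omega_1$-Suslin, so $\bf{\Sigma^1_2}$ wellfounded relations have rank below $\omega_1^+=\omega_2=\bf{\delta^1_2}$), but the argument has a genuine gap exactly where you place the ``heart of the proof,'' and the gap is not a technicality that the Coding Lemma or a boundedness lemma can patch in the way you suggest. For each fixed $\lambda<\bf{\delta^1_2}$, the relation $\prec_\lambda$ you build is just a copy of a $\bf{\Delta^1_2}$ prewellordering of length $\lambda$ pulled back to a choice set; its existence is unconditional and carries no information about the sequence $\langle A_\alpha\rangle$. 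The contradiction can only come from assembling these into a \emph{single} $\bf{\Sigma^1_2}$ relation of rank $\geq\bf{\delta^1_2}$, and that assembly is impossible in principle: if one could uniformly (in a $\bf{\Sigma^1_2}$ way, from codes for $\lambda$) produce wellfounded $\bf{\Sigma^1_2}$ relations of ranks cofinal in $\bf{\delta^1_2}$, their disjoint sum would already violate Kunen--Martin with no hypothesis about distinct sets at all --- so such uniformity cannot exist, for the $\leq^*_\lambda$ themselves or for your $C_\lambda$'s (each of which in any case carries its own real parameter depending on $\lambda$ and on the abstract sequence, with no way to collect these under $AD$). The hypothesis of a length-$\bf{\delta^1_2}$ sequence of \emph{distinct} sets has to enter through the distinctness, and the natural relation expressing ``$U_c\neq U_d$'' is $\bf{\Sigma^1_3}$, not $\bf{\Sigma^1_2}$; this complexity jump is precisely why Kechris's conjecture resisted classical methods and why no Kunen--Martin-style argument for this theorem is known.

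What Hjorth actually does in \cite{twoapp} --- and what the present paper does in its strengthening (Theorem \ref{main thm} with Corollary \ref{cor for n=0}) --- is not a rank/boundedness argument but a counting argument via directed systems of mice: each set $A_\alpha$ in the sequence is coded by a set $S_\alpha$ of conditions in the extender algebra of the direct limit of iterates, lying below the least cardinal $\kappa_\infty$ of the direct limit that is suitably strong, and one checks that $A_\alpha$ is recoverable from $S_\alpha$ (so $\alpha\mapsto S_\alpha$ is injective) while the collection of possible codes has cardinality less than $b=\bf{\delta^1_2}$. Your appeal to ``codes of uniformly bounded complexity'' gestures at this, but the boundedness lemma of \cite{bddness_lemma} computes prewellordering lengths and does not by itself produce the uniform $\bf{\Sigma^1_2}$ wellfounded relation your outline requires; without the inner-model-theoretic coding and the recovery lemma, the proposal does not close.
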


\begin{theorem}[Sargsyan]
\label{grigor unreachability thm}
    $\bf{\delta^1_{2n+2}}$ is $\bf{\Sigma^1_{2n+2}}$-unreachable.
\end{theorem}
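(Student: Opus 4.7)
The plan is to argue by contradiction: assume $\langle A_\alpha : \alpha < \bf{\delta^1_{2n+2}}\rangle$ is a sequence of pairwise distinct $\bf{\Sigma^1_{2n+2}}$-sets, and extract from it a $\bf{\Sigma^1_{2n+2}}$-prewellordering of length $\bf{\delta^1_{2n+2}}$. The initial segments of such a prewellordering give a sequence of $\bf{\delta^1_{2n+2}}$-many distinct $\bf{\Delta^1_{2n+2}}$-sets, contradicting Jackson's theorem on $\bf{\Delta^1_{2n+2}}$-unreachability.

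First I would fix a $\bf{\Sigma^1_{2n+2}}$-universal set $U\subseteq\R\times\R$ and, for each $\alpha$, a real code $e_\alpha$ with $U^{e_\alpha}=A_\alpha$. The naive attempt to define $e\leq^* f$ iff the least $\alpha$ with $U^e=A_\alpha$ is $\leq$ the least $\beta$ with $U^f=A_\beta$ is not obviously $\bf{\Sigma^1_{2n+2}}$, because equality of $\bf{\Sigma^1_{2n+2}}$-sets is $\bf{\Pi^1_{2n+2}}$. To repair this I would apply Moschovakis's coding lemma (available under $AD$) to each initial segment $\langle A_\alpha : \alpha < \gamma\rangle$ of the sequence with $\gamma<\bf{\delta^1_{2n+2}}$; this yields, for each such $\gamma$, a $\bf{\Sigma^1_{2n+2}}$-relation $R_\gamma$ that correctly codes the strict ordering among the first $\gamma$ sets. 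I would also represent each $A_\alpha=p[T_\alpha]$ using the scale on a universal $\bf{\Pi^1_{2n+1}}$-set, giving a uniform tree on $\omega\times\bf{\delta^1_{2n+1}}$ from which the $e_\alpha$ can be extracted in a $\bf{\Delta^1_{2n+2}}$ fashion.

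The next step is to glue the $R_\gamma$ into a single $\bf{\Sigma^1_{2n+2}}$-prewellordering $\leq^*$ of length $\bf{\delta^1_{2n+2}}$. I expect this to require Sargsyan's fine-structural machinery, since $\bf{\delta^1_{2n+2}}$ is realized as $\pi_{M_{2n+1}(x),\infty}(\delta^{M_{2n+1}(x)})$ and the family $\{R_\gamma\}$ can be represented inside a single countable iterate of $M_{2n+1}(x)$ whose image under the direct limit embedding produces a global $\bf{\Sigma^1_{2n+2}}$-relation. For $n=0$ (Hjorth's Theorem \ref{hjorth unreachability thm}) the analogous step uses sharps and the homogeneity of the uniform indiscernibles; for general $n$ the iteration strategy of $M_{2n+1}(x)$ is needed, together with the fact that the direct-limit system is $\bf{\Sigma^1_{2n+2}}$-definable in the appropriate sense.

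The main obstacle is precisely this gluing step. One must verify that the length of the global relation is genuinely $\bf{\delta^1_{2n+2}}$ rather than some proper initial ordinal strictly below $\pi_{M_{2n+1}(x),\infty}(\delta^{M_{2n+1}(x)})$, and that the gluing preserves $\bf{\Sigma^1_{2n+2}}$-definability despite quantifying over iteration strategies. Both verifications hinge on the $\bf{\Sigma^1_{2n+2}}$-complexity of the direct-limit system of $M_{2n+1}(x)$, and the extra complexity coming from quantification over iteration strategies must be absorbed by the existential real quantifier built into $\bf{\Sigma^1_{2n+2}}$. Once these are in place, the initial segments of $\leq^*$ form a $\bf{\delta^1_{2n+2}}$-long sequence of distinct $\bf{\Delta^1_{2n+2}}$-sets, completing the contradiction.
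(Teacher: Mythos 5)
There is a genuine gap, and it sits exactly where you place the ``main obstacle.'' The intermediate object your plan hinges on --- a $\bf{\Sigma^1_{2n+2}}$-prewellordering of length $\bf{\delta^1_{2n+2}}$ --- provably does not exist: $\bf{\Sigma^1_{2n+2}}$ sets are $\bf{\delta^1_{2n+1}}$-Suslin, so by Kunen--Martin every wellfounded $\bf{\Sigma^1_{2n+2}}$ relation has rank strictly below $(\bf{\delta^1_{2n+1}})^+ = \bf{\delta^1_{2n+2}}$. This means your ``gluing'' step is not a definability computation to be checked but the entire content of the theorem, and no mechanism for it is given. The difficulty it would have to overcome is that the sequence $\langle A_\alpha : \alpha < \bf{\delta^1_{2n+2}}\rangle$ is an arbitrary (in general undefinable) object; the coding lemma hands you a separate, non-uniform $R_\gamma$ for each $\gamma$, and there is no reason a choice of these can be amalgamated into a single pointclass-bounded relation. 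Two smaller problems compound this: initial segments $\{x : x \leq^* y\}$ of a $\bf{\Sigma^1_{2n+2}}$ prewellordering are only $\bf{\Sigma^1_{2n+2}}(y)$, not $\bf{\Delta^1_{2n+2}}$ (that would require a regular $\bf{\Sigma^1_{2n+2}}$-norm), so the handoff to Jackson's theorem does not go through as stated; and if the prewellordering did exist you would already be done by Kunen--Martin, making the appeal to Jackson's theorem a sign that the actual obstructions are not being tracked.

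The proof this paper relies on (Sargsyan's, in the cited reference, sketched in the last paragraph of the introduction and mirrored closely by the proof of the main theorem in Section 3) takes a genuinely different route: it never builds a prewellordering. Instead each $A_\alpha$ is coded by a condition in Woodin's extender algebra of the direct limit $M_\infty$ of all countable iterates of $M_{2n+1}$, with the conditions forced to live below (the image $\kappa_\infty$ of) the least cardinal that is $<\delta$-strong; one then recovers $A_\alpha$ from its code by genericity over iterates, so distinct sets get distinct codes, and counts: there are at most $|(\kappa_\infty^+)|^{M_\infty} < \bf{\delta^1_{2n+2}}$ such codes. If you want to pursue a proof yourself, that injection-and-count template --- rather than any prewellordering of the index set --- is the idea you are missing.
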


This is the optimal result for the pointclass $\bf{\Sigma^1_{2n+2}}$. Hjorth proved Theorem \ref{hjorth unreachability thm} in \cite{twoapp}. Only in the last few years did Sargysan discover his generalization to the rest of the projective hierarchy (see \cite{hra}). Neeman, Sargsyan, and the author gave an alternate proof of Theorem \ref{grigor unreachability thm} which also applies to any inductive-like pointclass in $L(\R)$ (see \cite{phdthesis}).

\begin{theorem}[Levinson, Neeman, Sargsyan]
\label{ind-like thm}
    Assume $ZF +AD + DC + V=L(\R)$. If $\bf{\Gamma}$ is inductive-like, then $\bf{\delta_\Gamma^+}$ is $\bf{\Gamma}$-unreachable.
\end{theorem}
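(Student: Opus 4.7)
The plan is to argue by contradiction. Suppose $\langle A_\alpha : \alpha < \bf{\delta_\Gamma^+}\rangle$ is a sequence of distinct $\bf{\Gamma}$-sets. The strategy is to transfer the sequence into a hybrid mouse associated to $\bf{\Gamma}$ and derive a cardinal-arithmetic contradiction there, paralleling Sargsyan's proof of Theorem \ref{grigor unreachability thm} at the projective levels.

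The first step is to reduce the problem to a sequence of codes living in an inner model. Since $\bf{\Gamma}$ is inductive-like, it has the scale property, so each $A_\alpha$ is $\bf{\delta_\Gamma}$-Suslin via a $\bf{\Gamma}$-scale. A Wadge-style reflection lets us assume every $A_\alpha \in \bf{\Gamma}(x_0)$ for a single real $x_0$, and the resulting scales are uniformly definable from $x_0$, yielding trees $T_\alpha$ on $\omega \times \bf{\delta_\Gamma}$ with $A_\alpha = p[T_\alpha]$ such that the map $\alpha \mapsto T_\alpha$ belongs to $\mathrm{HOD}^{L(\R)}_{x_0}$. By the Steel--Woodin HOD-analysis of $L(\R)$ specialized to inductive-like pointclasses, $\mathrm{HOD}^{L(\R)}_{x_0}$ is, below $\bf{\delta_\Gamma^+}$, a premouse $\mathcal{M}$ over $x_0$ in which $\bf{\delta_\Gamma}$ is Woodin; hence the whole $\bf{\delta_\Gamma^+}$-sequence of codes sits inside $\mathcal{M}$.

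The second step adapts Theorem \ref{grigor unreachability thm} to this setting. In $\mathcal{M}$, having a $\bf{\delta_\Gamma^+}$-sequence of pairwise distinct $\bf{\delta_\Gamma}$-Suslin codes conflicts with the fine-structural behavior of $\mathcal{M}$ at $\bf{\delta_\Gamma^+}$. Specifically, one performs a genericity iteration making the sequence generic over $\mathcal{M}$ for $\mathrm{Coll}(\omega, <\!\bf{\delta_\Gamma})$, and then argues---exactly as in the projective case---that the existence of the sequence in the resulting extension collapses $\bf{\delta_\Gamma^+}$ below the image of $\bf{\delta_\Gamma}$ under the iteration map, contradicting the fact that $\mathrm{Coll}(\omega,<\!\bf{\delta_\Gamma})$ preserves that successor in $\mathcal{M}$.

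The main obstacle is the inner-model-theoretic input: one must construct the hybrid mouse $\mathcal{M}$ for an \emph{arbitrary} inductive-like pointclass $\bf{\Gamma}$ in $L(\R)$ and verify that its iteration strategy is itself $\bf{\Gamma}$-definable, so that the coding step of the first half goes through and the direct limit of countable iterates identifies the correct piece of $\mathrm{HOD}^{L(\R)}_{x_0}$. Granted this machinery, the Wadge reflection is routine and the cardinality collapse is a formal repetition of the projective argument; the hypothesis $V=L(\R)$ enters precisely to guarantee that the HOD-analysis is available uniformly for every inductive-like $\bf{\Gamma}$.
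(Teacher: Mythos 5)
The central reduction in your first step does not exist, and it conceals exactly the difficulty this theorem is about. You propose a ``Wadge-style reflection'' putting every $A_\alpha$ into $\bf{\Gamma}(x_0)$ for a single real $x_0$, so that the trees $T_\alpha$, and hence the entire length-$\bf{\delta_\Gamma^+}$ sequence, land inside $\mathrm{HOD}^{L(\R)}_{x_0}$. Each $A_\alpha$ is definable only from its own real parameter, and under $AD$ there is no way to absorb $\bf{\delta_\Gamma^+}$ many arbitrary parameters into one real: if all the $A_\alpha$ were literally in $\Gamma(x_0)$ there would be only countably many of them and the theorem would be trivial, and if you mean ordinal definability from $x_0$ the claim is equally unjustified, since the $A_\alpha$ are arbitrary $\bf{\Gamma}$-sets. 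The actual argument (in \cite{phdthesis}, and in the proof of Theorem \ref{main thm} here, which follows the same template) never places the sequence inside one model. Instead one fixes a universal set $U$, lets $B_\alpha$ be the set of codes $y$ with $U_y = A_\alpha$, applies the coding lemma to a prewellordering of the appropriate length to get a definable selection of codes, and then, for each $\alpha$ separately, uses genericity iterations to prove that $A_\alpha$ is recovered from the set $S_\alpha$ of extender-algebra conditions in the direct limit model which force membership in $U_{ea_r^2}$ jointly with an $\alpha$-good condition on the code side. The entire content of the proof is the injectivity of $\alpha\mapsto S_\alpha$, established by reconstructing $A_\alpha$ from $S_\alpha$.

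Your endgame is also not the right one. A genericity iteration makes a real (or countably many reals) generic for the extender algebra of a countable mouse; it cannot make a $\bf{\delta_\Gamma^+}$-sequence of sets of reals generic for $\mathrm{Coll}(\omega,<\!\bf{\delta_\Gamma})$, and the contradiction is not a collapse-preservation statement. It is a counting argument: the $S_\alpha$ are pairwise distinct subsets of the extender algebra of the direct limit at (or below) the image of the least cardinal that is strong up to the Woodin cardinal relative to the relevant theory, and the direct limit computes the number of such subsets to be strictly less than $\bf{\delta_\Gamma^+}$, as in inequality (\ref{powerset smaller than b}) of this paper. You are right that the inductive-like case needs extra inner-model-theoretic input (a suitable hybrid mouse with a sufficiently definable strategy, and codes that may be cofinal in, rather than bounded below, the strong cardinal), but those issues only arise after the coding scheme is set up correctly, and your proposal never sets it up.
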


Locating the least $\bf{\Gamma_{2n+2,m}}$-unreachable cardinal follows trivially from Theorem \ref{grigor unreachability thm} and prior work. By a result of Moschovakis, there is a $\bf{\Pi^1_{2n+3}}$ prewellordering of length $\bf{\delta^1_{2n+3}}$. In particular, $\bf{\delta^1_{2n+3}}$ is $\bf{\Delta^1_{2n+3}}$-reachable. But $\bf{\delta^1_{2n+4}}$ (the successor of $\bf{\delta^1_{2n+3}}$) is $\bf{\Sigma^1_{2n+4}}$-unreachable. Since $\bf{\Delta^1_{2n+3}} \subset \bf{\Gamma^1_{2n+2,m}} \subset \bf{\Sigma^1_{2n+4}}$, the least $\bf{\Gamma_{2n+2,m}}$-unreachable cardinal must be $\bf{\delta^1_{2n+4}}$.

On the other hand, many cardinals lie between $\bf{\delta^1_{2n+2}}$ (the minimal $\bf{\Sigma^1_{2n+2}}$-unreachable cardinal) and $\bf{\delta^1_{2n+3}}$ (the greatest $\bf{\Delta^1_{2n+3}}$-reachable cardinal) and almost nothing was known about unreachability for pointclasses between $\bf{\Sigma^1_{2n+2}}$ and $\bf{\Delta^1_{2n+3}}$. We address this gap in the main theorem of this paper:

\begin{theorem}
\label{main thm}
    $b_{2n+1,m,k+1}$ is $\bf{\Gamma_{2n+1,m,k}}$-unreachable.
\end{theorem}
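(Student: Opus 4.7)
The strategy is to translate the hypothetical sequence into a sequence of distinct ordinals bounded strictly below $b_{2n+1,m,k+1}$, thereby contradicting that $b_{2n+1,m,k+1}$ is a cardinal (Theorem \ref{b is cardinal}). The translation proceeds through the mouse $M_{2n+1}(z)$ and its direct limit system, generalizing Hjorth's approach to Theorem \ref{hjorth unreachability thm} and its projective-hierarchy extensions (Theorems \ref{grigor unreachability thm}, \ref{ind-like thm}).

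Assume for contradiction that $\langle A_\alpha : \alpha < b_{2n+1,m,k+1}\rangle$ is a sequence of distinct $\bf{\Gamma_{2n+1,m,k}}$-sets, written as $A_\alpha = \{y : M_{2n} \models \psi_\alpha(y, x_\alpha, s_m)\}$. First I would reduce to a common real parameter: pass to an unbounded subsequence on which a single $z \in \R$ uniformly codes the relevant $x_\alpha$'s, so each $A_\alpha$ in the subsequence is $\Sigma_k^{M_{2n}}(z, s_m)$-definable. This uses a $\bf{\Gamma_{2n+1,m,k}}$-boundedness principle for the map $\alpha \mapsto x_\alpha$, together with the fact that the cofinality of $b_{2n+1,m,k+1}$ outstrips any $\bf{\Gamma_{2n+1,m,k}}$-norm on $\R$.

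Next, I would lift $M_{2n}$-truth to $M_{2n+1}(z)$. Because $M_{2n+1}(z)$ has one additional Woodin cardinal, the statement ``$M_{2n} \models \psi_\alpha(y, z, s_m)$'' becomes $\Sigma_{k+1}$-definable over $M_{2n+1}(z)$ from $y, z, s_m$ by generic-iteration absoluteness across $\delta_{M_{2n+1}(z)}$. Consequently each $A_\alpha$ is coded by an ordinal $\theta_\alpha$ in the $\Sigma_{k+1}$-Skolem hull of $M_{2n+1}(z)$ from $u_0, \ldots, u_{m-1}$, so $\theta_\alpha < \gamma^{M_{2n+1}(z)}_{m,k+1}$. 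Pushing through the direct limit, $\pi_{M_{2n+1}(z), \infty}(\theta_\alpha) < \gamma^{2n+1}_{m,k+1,z,\infty} \leq b_{2n+1,m,k+1}$, and canonicity of the coding ensures distinct $A_\alpha$'s yield distinct images. For a $z$ chosen at the reduction step so that $\gamma^{2n+1}_{m,k+1,z,\infty}$ lies strictly below $b_{2n+1,m,k+1}$, this injects the cardinal $b_{2n+1,m,k+1}$ into a smaller ordinal, the desired contradiction.

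The main obstacle is the first reduction step. Under $AD$ the reals can surject onto $b_{2n+1,m,k+1}$, so a naive pigeonhole does not suffice; one needs a genuine $\bf{\Gamma_{2n+1,m,k}}$-boundedness statement limiting the Wadge complexity of the family $\{x_\alpha : \alpha < b_{2n+1,m,k+1}\}$, analogous to the devices used in \cite{hra} and \cite{phdthesis}. A secondary difficulty is pinpointing that the $M_{2n} \to M_{2n+1}(z)$ lift costs exactly one quantifier, as this precise count is what yields the bound $b_{2n+1,m,k+1}$ rather than $b_{2n+1,m,k}$.
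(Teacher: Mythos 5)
There is a genuine gap, and it sits exactly where you flagged the ``main obstacle'': the reduction to a single real parameter cannot work in the form you propose. For a fixed real $z$, the class $\Gamma_{2n+1,m,k}(z)$ is \emph{countable}, since a member is determined by the choice of a $\Sigma_k$-formula $\psi$ (the parameters $z$ and $s_m$ being fixed). So no uncountable subfamily of the distinct $A_\alpha$'s --- let alone an unbounded one --- can be uniformly $\Sigma_k^{M_{2n}}(z,s_m)$-definable from one $z$, and no boundedness principle can rescue this: the subsequence you want simply does not exist. The same countability problem infects your second step: for fixed $z$ the $\Sigma_{k+1}$-Skolem hull of $M_{2n+1}(z)$ from $u_0,\dots,u_{m-1}$ is countable, so the ordinals $\theta_\alpha$ cannot separate $b_{2n+1,m,k+1}$-many distinct sets. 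The reduction step is where all the content of the theorem would have to live, and it is false rather than merely difficult.

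The fix, and the actual shape of the proof, is to keep a second, \emph{varying} real parameter instead of absorbing it into $z$. One fixes a universal set $U\in\Gamma_{2n+1,m,k}(x_0)$, writes $A_\alpha=U_y$ for $y$ in the code set $B_\alpha=\{y:U_y=A_\alpha\}$, and uses the Coding Lemma (against a $\bf{\Delta^1_{2n+3}}$ prewellordering of length $b$ read off the directed system $\I(M_{2n+1})$) to select witnesses $y$; these are then captured as generics for the product extender algebra over iterates $M$ of $M_{2n+1}(x_0)$ via the $StrLe$ construction. Each $A_\alpha$ is coded not by an ordinal but by a set $S_\alpha$ of extender algebra conditions lying below $\kappa_\infty$, the direct-limit image of the least cardinal of $M$ that is $<\delta_M$-strong with respect to the $\Sigma_k$-theory $Th^M_k$ with parameters in $M|\delta_M\cup s$. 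This is where the shift from $k$ to $k+1$ actually occurs: $\kappa_M$ is $\Sigma_{k+1}$-definable from $x_0$ and $s$, hence $\kappa_\infty<b_{2n+1,m,k+1}$, and the contradiction is the count $|P(\kappa_\infty)|^{M_\infty}\leq(\kappa_\infty^+)^{M_\infty}<\kappa_\infty^+\leq b$ (using Theorem \ref{b is cardinal}) against the injectivity of $\alpha\mapsto S_\alpha$, which is verified by reconstructing $A_\alpha$ from $S_\alpha$. Your instincts that the endgame violates cardinality of $b_{2n+1,m,k+1}$ and that exactly one quantifier is spent in passing to $M_{2n+1}$ are both right, but the quantifier is spent on defining the strong cardinal used for the coding, not on defining membership in $A_\alpha$.
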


Since $sup_{m<\omega} b_{2n+1,m} = \kappa^1_{2n+3}$ and $(\kappa^1_{2n+3})^+ = \bf{\delta^1_{2n+3}}$, the theorem implies 

\begin{corollary}
\label{main cor}
    $\bf{\delta^1_{2n+3}}$ is $\bf{\Gamma_{2n+1}}$-unreachable.
\end{corollary}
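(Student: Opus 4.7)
The plan is to deduce Corollary \ref{main cor} directly from Theorem \ref{main thm} via a pigeonhole argument, using two facts recalled in the introduction: the decomposition $\bf{\Gamma_{2n+1}} = \bigcup_{m,k\in\omega}\bf{\Gamma_{2n+1,m,k}}$, together with the identities $\sup_{m<\omega}b_{2n+1,m} = \kappa^1_{2n+3}$ and $(\kappa^1_{2n+3})^+ = \bf{\delta^1_{2n+3}}$. The second identity shows that $\bf{\delta^1_{2n+3}}$ is a successor cardinal and hence regular under $ZF+DC$, which is exactly what is needed to pigeonhole over the countable index set $\omega\times\omega$.

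To execute, I would assume toward a contradiction that $\bf{\delta^1_{2n+3}}$ is $\bf{\Gamma_{2n+1}}$-reachable, witnessed by a sequence $\langle A_\alpha : \alpha < \bf{\delta^1_{2n+3}}\rangle$ of distinct sets. For each $\alpha$, pick $m(\alpha),k(\alpha)<\omega$ with $A_\alpha\in\bf{\Gamma_{2n+1,m(\alpha),k(\alpha)}}$. Regularity of $\bf{\delta^1_{2n+3}}$ produces some fixed pair $(m,k)$ and a fiber $S = \{\alpha : m(\alpha)=m,\,k(\alpha)=k\}$ of order type $\bf{\delta^1_{2n+3}}$. Reindexing along $S$ yields a sequence of distinct $\bf{\Gamma_{2n+1,m,k}}$-sets of length $\bf{\delta^1_{2n+3}}$. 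Because $b_{2n+1,m,k+1}\le b_{2n+1,m} < \kappa^1_{2n+3} < \bf{\delta^1_{2n+3}}$ (the strict inequality being the quoted bound $b_{2n+1,m}<\kappa^1_{2n+3}$), the initial segment of length $b_{2n+1,m,k+1}$ — a cardinal by Theorem \ref{b is cardinal} — is a sequence of distinct $\bf{\Gamma_{2n+1,m,k}}$-sets witnessing that $b_{2n+1,m,k+1}$ is $\bf{\Gamma_{2n+1,m,k}}$-reachable, directly contradicting Theorem \ref{main thm}.

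There is no genuine obstacle at the level of the corollary: Theorem \ref{main thm} performs all the descriptive set-theoretic work, and the present argument only packages the individual unreachability statements together using the countable stratification of $\bf{\Gamma_{2n+1}}$. The single point requiring any care is the regularity of $\bf{\delta^1_{2n+3}}$, but this is immediate from its stated identification as a successor cardinal and so costs nothing.
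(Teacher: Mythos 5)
Your argument is the one the paper intends: the corollary is stated there as an immediate consequence of Theorem \ref{main thm} together with $\sup_{m<\omega}b_{2n+1,m}=\kappa^1_{2n+3}$ and $(\kappa^1_{2n+3})^+=\bf{\delta^1_{2n+3}}$, and your pigeonhole over the countable stratification $\bf{\Gamma_{2n+1}}=\bigcup_{m,k}\bf{\Gamma_{2n+1,m,k}}$ is exactly the omitted bookkeeping. One correction to your justification, though: in $ZF+DC$, and in particular under $AD$, successor cardinals need \emph{not} be regular --- for instance $\aleph_3=(\aleph_2)^+$ is singular of cofinality $\aleph_2$ under $AD$ --- so ``successor hence regular'' is not a valid inference in this context. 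What you actually need, and what \emph{does} follow from countable choice (hence from $DC$) plus successor-ness, is that $(\kappa^1_{2n+3})^+$ cannot be written as a countable union of sets of cardinality $\le\kappa^1_{2n+3}$; therefore some fiber $S_{m,k}$ has cardinality, and hence order type, $\bf{\delta^1_{2n+3}}$, and its initial segment of length $b_{2n+1,m,k+1}\le b_{2n+1,m}<\kappa^1_{2n+3}<\bf{\delta^1_{2n+3}}$ contradicts Theorem \ref{main thm}. (Alternatively, $\bf{\delta^1_{2n+3}}$ is indeed regular under $AD$, but because it is a projective ordinal, not because it is a successor cardinal.) With that repair the argument is complete and coincides with the paper's.
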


This is the optimal result for the pointclass $\bf{\Gamma_{2n+1}}$. Theorem \ref{main thm} also gives optimal bounds for unreachability of  $\bf{\Gamma_{1,m}}$. As above, the minimal $\bf{\Sigma^1_2}$ and $\bf{\Delta^1_3}$-unreachable cardinals were known to be $\bf{\delta^1_2} = \aleph_2$ and $\bf{\delta^1_4} = \aleph_{\omega+2}$, respectively. It was open whether there exist any $m\in [3,\omega+1]$ such that $\aleph_m$ is the minimal $\bf{\Gamma}$-unreachable cardinal for some pointclass $\bf{\Gamma}$. This cannot hold for $m=\omega$ because $\aleph_\omega$ has cofinality $\omega$. But Theorem \ref{main thm} implies it holds for every other $m\in[3,\omega+1]$:

\begin{corollary}
\label{cor for n=0}
    $\aleph_{m+2}$ is the least cardinal which is $\bf{\Gamma_{1,m}}$-unreachable. $\aleph_{\omega+1}$ is the least cardinal which is $\bf{\Gamma_1}$-unreachable.
\end{corollary}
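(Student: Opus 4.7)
The plan is to derive both claims from Theorem \ref{main thm} at $n=0$, combined with Hjorth's strengthening $b_{1,m,k}=u_{m+1}$ (valid for every $k\ge 1$, as noted just after Theorem \ref{hjorth pwo thm}) and the $AD$-identity $u_{m+1}=\aleph_{m+2}$.

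First I would handle the $\bf{\Gamma_{1,m}}$ statement. Theorem \ref{main thm} at $n=0$ says that $b_{1,m,k+1}=u_{m+1}=\aleph_{m+2}$ is $\bf{\Gamma_{1,m,k}}$-unreachable for every $k\ge 1$. Supposing toward contradiction that $\langle A_\alpha:\alpha<\aleph_{m+2}\rangle$ is a sequence of distinct $\bf{\Gamma_{1,m}}$-sets, each $A_\alpha$ lies in some $\bf{\Gamma_{1,m,k_\alpha}}$ because $\bf{\Gamma_{1,m}}=\bigcup_{k\ge 1}\bf{\Gamma_{1,m,k}}$. The cardinal $\aleph_{m+2}=u_{m+1}$ is regular and uncountable under $AD$, so pigeonholing on $k_\alpha\in\omega$ extracts $\aleph_{m+2}$ many indices sharing a single $k$ --- a sequence of distinct $\bf{\Gamma_{1,m,k}}$-sets of length $\aleph_{m+2}$, contradicting the unreachability just quoted. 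For the minimality of $\aleph_{m+2}$, Theorem \ref{hjorth pwo thm} gives $a_{1,m}=\aleph_{m+2}$, so $\bf{\Gamma_{1,m}}$ contains prewellorderings of length arbitrarily close to $\aleph_{m+2}$; the proper initial segments of a prewellordering of length $\lambda$ form $\lambda$ many distinct $\bf{\Gamma_{1,m}}$-sets, witnessing $\bf{\Gamma_{1,m}}$-reachability of every $\kappa<\aleph_{m+2}$.

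For the $\bf{\Gamma_1}$ statement, Corollary \ref{main cor} at $n=0$ immediately yields that $\bf{\delta^1_3}$ is $\bf{\Gamma_1}$-unreachable, and under $AD$ one has $\bf{\delta^1_3}=(\kappa^1_3)^+=(\sup_{m<\omega}u_{m+1})^+=(\aleph_\omega)^+=\aleph_{\omega+1}$. For reachability, every $\kappa\le\aleph_n$ with $n<\omega$ is already $\bf{\Gamma_{1,n}}$-reachable by the first statement, hence $\bf{\Gamma_1}$-reachable. The only remaining cardinal below $\aleph_{\omega+1}$ is $\aleph_\omega$ itself. To witness it, I would fix, for each $n\ge 1$, a sequence $\langle B^n_\alpha:\alpha<\aleph_n\rangle$ of distinct $\bf{\Gamma_{1,n-1}}$-sets (which exists by the first statement, since $\aleph_n<\aleph_{n+1}$), and then tag: using a recursive pairing $\langle\cdot,\cdot\rangle:\omega\times\R\to\R$, set $A^n_\alpha=\{\langle n,x\rangle:x\in B^n_\alpha\}$. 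Each $A^n_\alpha$ lies in $\bf{\Gamma_{1,n-1}}\subseteq\bf{\Gamma_1}$ by closure under recursive substitution, and sets with distinct tag $n$ are automatically distinct, so any enumeration of $\{A^n_\alpha:n\ge 1,\,\alpha<\aleph_n\}$ of order type $\aleph_\omega$ gives the required $\bf{\Gamma_1}$-sequence.

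The one step that genuinely requires care is the pigeonhole reduction in the first paragraph: it uses the regularity of $u_{m+1}$ under $AD$ together with the crucial uniformity of Hjorth's bound, which gives $b_{1,m,k+1}=\aleph_{m+2}$ for every $k\ge 1$ and not merely in the supremum. Everything else is a direct substitution into the cited theorems or a routine coding argument.
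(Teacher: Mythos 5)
Your proposal is correct and follows exactly the route the paper intends: the paper's entire proof is the single sentence that the corollary is ``immediate from Theorem \ref{main thm} and Theorem \ref{hjorth pwo thm},'' and you are simply filling in the countable pigeonhole over $k$ (and over $m$ for $\bf{\Gamma_1}$), the identification $b_{1,m,k+1}=u_{m+1}=\aleph_{m+2}$, and the standard reachability witnesses from prewellorderings. One factual slip to fix: under $AD$ the cardinal $\aleph_{m+2}$ is \emph{not} regular for $m\geq 1$ (the $\aleph_n$ for $3\leq n<\omega$ have cofinality $\aleph_2$); your pigeonhole only needs that $\aleph_{m+2}$ has uncountable cofinality, equivalently that it is not a countable union of sets of ordinals of smaller cardinality, which does hold, so the argument survives with that wording.
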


The corollary is immediate from \ref{main thm} and \ref{hjorth pwo thm}. Corollary \ref{cor for n=0} strengthens Theorem \ref{hjorth unreachability thm}, since $\bf{\Sigma^1_{2}} \subset \Gamma_{1,0}$ and $\aleph_2 = \bf{\delta^1_2}$. As another corollary of Theorem \ref{main thm}, we get a bound on the lengths of prewellorderings in $\bf{\Gamma_{2n+1,m}}$ which is at least as good as the bound given in \cite{on_the_pwo}:

\begin{corollary}
\label{cor on pwo}
    $a_{2n+1,m}\leq b_{2n+1,m}$
\end{corollary}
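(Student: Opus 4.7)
The plan is to derive Corollary \ref{cor on pwo} directly from Theorem \ref{main thm} by converting a prewellordering into a sequence of distinct sets via its initial segments. Let $\leq^*$ be any prewellordering in $\bf{\Gamma_{2n+1,m}}$ of length $\lambda$. Since $\bf{\Gamma_{2n+1,m}} = \bigcup_{k\in\omega}\bf{\Gamma_{2n+1,m,k}}$, fix some $k$ for which $\leq^*\in\bf{\Gamma_{2n+1,m,k}}$. Using $DC$, for each $\alpha < \lambda$ pick a real $z_\alpha$ of $\leq^*$-rank exactly $\alpha$, and set $A_\alpha = \chav{x : x \leq^* z_\alpha}$.

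Because $\bf{\Gamma_{2n+1,m,k}}$ is a boldface pointclass — closed under real parameters by the definition $\bf{\Gamma_{n,m,k}}=\bigcup_{x_0\in\R}\Gamma_{n,m,k}(x_0)$ — each section $A_\alpha$ lies in $\bf{\Gamma_{2n+1,m,k}}$: one simply fuses $z_\alpha$ with the real parameter defining $\leq^*$. The sets $A_\alpha$ are pairwise distinct, since $z_\alpha\in A_\beta$ iff $\alpha\leq\beta$. Hence $\lambda$ is $\bf{\Gamma_{2n+1,m,k}}$-reachable, so Theorem \ref{main thm} forces $\lambda < b_{2n+1,m,k+1}\leq b_{2n+1,m}$. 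Taking the supremum over all prewellorderings in $\bf{\Gamma_{2n+1,m}}$ yields $a_{2n+1,m}\leq b_{2n+1,m}$, as desired.

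There is essentially no obstacle here: the argument uses only closure of the relevant pointclass under real parameters and the main theorem of the paper. The one minor point worth flagging is that Theorem \ref{main thm} must apply to sets defined with an arbitrary real parameter folded in, but this is automatic since Theorem \ref{main thm} is stated for the full boldface class $\bf{\Gamma_{2n+1,m,k}}$. Thus the corollary is really a bookkeeping consequence of Theorem \ref{main thm}, and the comparison with the bound from \cite{on_the_pwo} follows immediately once one observes that $b_{2n+1,m}$ is at most the bound Sargsyan obtains.
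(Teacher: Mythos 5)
Your proposal is correct and is exactly the derivation the paper intends (the paper offers no separate proof, treating the corollary as immediate from Theorem \ref{main thm} via the initial segments of the prewellordering, just as you do). One small repair: $DC$ does not license choosing $\lambda$-many representatives $z_\alpha$, but no choice is needed, since for a prewellordering the set $\{x : x\leq^* z\}$ is the same for every $z$ of rank $\alpha$, so the sequence $\langle A_\alpha : \alpha<\lambda\rangle$ is outright definable from $\leq^*$, and the existence of some $z$ of each rank $\alpha<\lambda$ already puts each $A_\alpha$ in $\bf{\Gamma_{2n+1,m,k}}$.
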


The proofs of Theorems \ref{hjorth unreachability thm}, \ref{grigor unreachability thm}, \ref{ind-like thm}, and \ref{main thm} all use the technique of directed systems of mice. \cite{hra} bounds the length of a sequence of $\bf{\Sigma^1_{2n+2}}$ sets by coding each member of the sequence by a single condition in the extender algebra of the direct limit of all countable iterates of $M_{2n+1}$ below its least cardinal which is strong up to its least Woodin. \cite{phdthesis} uses a different coding to analyze the inductive-like cases --- each set of reals is coded by a set of conditions contained in the extender algebra of the direct limit of a suitable mouse below, but possibly cofinal in, its least cardinal which is strong up to its Woodin cardinal. Our proof of \ref{main thm} will use the same directed system as \cite{hra}, but the coding is more similar to that of \cite{phdthesis} --- each $\bf{\Gamma_{2n+1,m,k}}$-set will be coded by (roughly) a set of conditions in the extender algebra of the direct limit of $M_{2n+1}$ below its least cardinal which is strong up to its least Woodin relative to the theory of the first $m$ uniform indiscernibles.

\section{Background and Notation}

We will assume the reader is familiar with the basic theory of mice as developed in \cite{ooimt}. Here a mouse will refer to a premouse with an $\omega_1+1$-iteration strategy. In this section we mention a few concepts and bits of notation which are less standard. A more detailed exposition of most of the material in this section appears in Section 1 of \cite{hra}.

\subsubsection*{Woodin's Extender Algebra}

For a model $M$ with at least one Woodin cardinal, let $\delta_M$ denote the least Woodin cardinal of $M$. We write $Ea_M$ for Woodin's extender algebra in $M$ at $\delta_M$. $Ea_M$ is a $\delta_M$-c.c. complete Boolean algebra. (see Theorem 7.14 of \cite{ooimt}). We will write $ea$ for a generic for $Ea_M$. When considering the product forcing $Ea_M\times Ea_M$, we will write $ea_l$ and $ea_r$ for the left and right generics, respectively. $ea_r$ will often code a triple, which shall be written $(ea_r^1,ea_r^2,ea_r^3)$.

\subsubsection*{Directed Systems of Mice}

We say a premouse $N$ is a complete iterate of a mouse $M$ if there is an iteration tree $\T$ on $M$ with last model $N$ such that $\T$ is according to the iteration strategy for $M$ and $\T$ has no drops on the branch to $N$. In this case, we write $\pi_{M,N}$ for the iteration embedding from $M$ to $N$.

Suppose $M$ is a mouse with at least one Woodin cardinal and let $\I(M)$ be the set of all complete iterates of $M$ by countable iteration trees below $\delta_M$. If $N\in \I(M)$, we write $\pi_{N,\infty}$ for the direct limit embedding of $N$ into the direct limit of $\I(M)$.

Fix $n<\omega$ and let $\I = \I(M_{2n+1})$. Consider some $x_0\in\R$ such that a real recursive in $x_0$ codes $M_{2n+1}$ and let $M\in \I(M_{2n+1}(x_0))$. We will want to use the approximation of $\I$ inside $M$ developed in \cite{hra}. Let $\nu$ be the least inaccessible cardinal of $M$ above $\delta_M$ and let
\begin{align*}
    \I^M = \I \cap M|\nu.
\end{align*}
$\H^M$ will denote the direct limit of $\I$. Since $\I^M$ is countable, $\H^M\in \I$. 

If $N\in \I^M$ and $\zeta < \delta_N$, the map $\pi_{N,\H^M}\upharpoonright \zeta$ is definable in $M$. More is true:

\begin{fact}
\label{direct limit approx definable}
    If $y$ is $Ea_M$-generic, $N \in M[y]\cap \I$, and $\zeta<\delta_N$, then $\H^M$ is a complete iterate of $N$ and $\pi_{N,\H^M}\upharpoonright\zeta$ is definable in $M[y]$ (uniformly in $M$, $y$, and $N$). 
\end{fact}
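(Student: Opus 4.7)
The plan is to establish the two assertions in turn: first, that $\H^M$ is a complete iterate of $N$, and second, that $\pi_{N,\H^M}\upharpoonright\zeta$ is definable in $M[y]$ uniformly in $M$, $y$, and $N$.

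For the first assertion: since $N\in\I$, $\I$ is directed under iteration via the common iteration strategy of $M_{2n+1}$, and $\H^M$ is the direct limit of $\I$, the direct-limit embedding $\pi_{N,\H^M}\colon N\to\H^M$ exists as an iteration map. Because $\H^M\in\I$ (by countability of $\I^M$, as already noted before the fact), the tail of the iteration tree above $N$ witnesses that $\H^M$ is itself a complete iterate of $N$.

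The main work is the definability of $\pi_{N,\H^M}\upharpoonright\zeta$. The plan is to factor through a common iterate. Inside $M[y]$, run the iteration strategy of $M_{2n+1}$ (which is projective in a code for $M_{2n+1}$, hence uniformly definable in $M[y]$ from parameters) to produce a common iterate $P$ of $N$ and some $N^*\in\I^M$; by iterating further if necessary, arrange $P\in\I^M$ as well. By coherence of the direct-limit maps,
\[
\pi_{N,\H^M}\upharpoonright\zeta \;=\; \pi_{P,\H^M}\circ\bigl(\pi_{N,P}\upharpoonright\zeta\bigr).
\]
The factor $\pi_{N,P}\upharpoonright\zeta$ is an initial segment of the tree produced by the strategy in $M[y]$ and is therefore definable in $M[y]$. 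The factor $\pi_{P,\H^M}\upharpoonright\pi_{N,P}(\zeta)$ is already definable in $M$: since $P\in\I^M\subset M|\nu$ and $\pi_{N,P}(\zeta)<\delta_P$, the restriction of $\pi_{P,\H^M}$ below $\pi_{N,P}(\zeta)$ is computed by $M$ from the countable directed system $\I^M$ alone.

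The step requiring the most care, and the likely main obstacle, is arranging $P\in\I^M$. Here the plan is to exploit the $\delta_M$-c.c.\ of $Ea_M$: a code for $N$ is captured by an $Ea_M$-name of rank below $\delta_M$, so the coiteration of $N$ against elements of $\I^M$ can be organized inside $M|\nu$ using the uniform definability of the strategy, together with the fact that $\I^M$ is cofinal in the portion of $\I$ seen by $M$. Uniformity of the overall definition in $M$, $y$, and $N$ then follows from uniform definability of the strategy combined with uniform choice of $N^*$ and $P$.
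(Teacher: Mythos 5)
The paper itself does not prove this Fact; it defers to the analysis of \cite{on_the_pwo} and \cite{phdthesis}, so there is no in-paper argument to compare against. Judged on its own terms, your plan has two genuine gaps, both located at the same pressure point.

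First, your opening step begs the question. Despite the paper's (surely typographical) phrase ``the direct limit of $\I$,'' $\H^M$ is the direct limit of the countable subsystem $\I^M=\I\cap M|\nu$ --- that is what makes $\H^M\in\I$. An $N\in M[y]\cap\I$ is in general \emph{not} a member of $\I^M$ (it lives in $M[y]$, not in $M|\nu$), so there is no ``direct-limit embedding $\pi_{N,\H^M}$'' handed to you by directedness; directedness of $\I$ only yields a common complete iterate of $N$ and $\H^M$ lying above both. The assertion that the local system $\I^M$ absorbs every iterate appearing in the generic extension is exactly the nontrivial content of the Fact, and your first paragraph assumes it.

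Second, the step you yourself flag --- arranging $P\in\I^M$ --- is not repaired by the $\delta_M$-c.c.\ observation. Having an $Ea_M$-name for $N$ of small rank does not let $M$ carry out the coiteration of $N$ against members of $\I^M$: that coiteration depends on the actual generic $y$, takes place in $M[y]$, and its last model $P$ will in general lie in $M[y]\setminus M$, hence outside $\I^M$. What is actually needed is an absorption/cofinality argument showing that every $N\in M[y]\cap\I$ has a complete iterate in $\I^M$ (equivalently, that $\I^M$ remains cofinal, under the iterate-of relation, in $\I\cap M[y]$); in the literature this is obtained either by a simultaneous, Boolean-valued comparison over all conditions carried out inside $M$, or by an absorption argument in the style of Facts \ref{generic for strle} and \ref{strle in directed system}. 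Once that is in place, your factorization $\pi_{N,\H^M}\upharpoonright\zeta=\pi_{P,\H^M}\circ(\pi_{N,P}\upharpoonright\zeta)$ and the definability bookkeeping go through, modulo one more point worth making explicit: $M[y]$ computes the $M_{2n+1}$-strategy correctly on its own trees not merely because the strategy is ``projective,'' but because $M[y]$ is closed under, and correct about, the $M_{2n}^{\#}$ operator that supplies the guiding $Q$-structures.
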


Fact \ref{direct limit approx definable} is not difficult to prove from the analysis of \cite{on_the_pwo}. A proof of an analogous fact is given in \cite{phdthesis} for a directed system on a different mouse.

\subsubsection*{The StrLe Construction}

We require a way of constructing an $M_{2n+1}(x)$-like mouse inside an $M_{2n+1}(y)$-like mouse when $x\leq_T y$. Our method for this is the ``strong $L[\vec{E}]$-construction.'' Suppose $M$ is a $y$-premouse with at least one Woodin cardinal and $x\in M\cap \R$. Let $N$ be the output of the fully-backgrounded Mitchell-Steel construction over $x$ performed in $M|\delta_M$. Let $StrLe[M,x]$ be the result of Steel's $S$-construction\footnote{Referred to as the $P$-construction in \cite{self-iter}.} in $M$ over $N$. The next two facts can be found in \cite{hra}.

\begin{fact}
\label{generic for strle}
    Suppose $M$, $y$, and $x$ are as above and $z\in M\cap\R$. Let $S = StrLe[M,x]$. Then $z$ is $Ea_S$-generic over $S$.
\end{fact}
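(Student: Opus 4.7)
My plan is to verify the defining axioms of $Ea_S$ at $\delta_S = \delta_M$ one at a time, using that $S$ is built backgrounded over $M$. Each axiom is indexed by a pair $(E,\vec{A})$ where $E$ is an extender on the $S$-sequence with critical point $\kappa < \delta_S$ and $\vec{A} = \langle A_\alpha : \alpha < \kappa\rangle \in S$ is a sequence of $\infty$-Borel codes; the axiom asserts
\[
\bigvee_{\alpha < \kappa} A_\alpha \;\leftrightarrow\; \bigvee_{\alpha < \lambda(E)} (j_E(\vec{A}))_\alpha.
\]
Since Steel's $S$-construction adds no new extenders at or below $\delta_M = \delta_S$, any $E$ appearing in such an axiom already lies on the sequence of $N$, the fully-backgrounded Mitchell--Steel construction over $x$ inside $M|\delta_M$. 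By the backgrounding, $E$ is certified by some extender $F$ on the $M$-sequence with $\text{crit}(F) = \text{crit}(E) = \kappa$, and $j^M_F$ agrees with $j^S_E$ on sequences from $S$ up to $\lambda(E)$; in particular $j^M_F(\vec{A}) \upharpoonright \lambda(E) = j^S_E(\vec{A})$.

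To verify the axiom at $z$, the direction $\bigvee A_\alpha \Rightarrow \bigvee (j_E(\vec{A}))_\alpha$ is immediate since $(j_E(\vec{A}))_\alpha = A_\alpha$ for $\alpha < \kappa$. For the converse, suppose $z \in (j^S_E(\vec{A}))_\alpha$ for some $\alpha < \lambda(E)$; by the agreement between $j^S_E$ and $j^M_F$ noted above, also $z \in (j^M_F(\vec{A}))_\alpha$. Since $\text{crit}(F) > \omega$, we have $j^M_F(z) = z$, so the statement ``$\exists \alpha^* < j^M_F(\kappa)\ z \in (j^M_F(\vec{A}))_{\alpha^*}$'' is true in $M$, witnessed by $\alpha$. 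By elementarity of $j^M_F$ inside $M$, there exists $\alpha' < \kappa$ with $z \in A_{\alpha'}$ (and this is absolute to $V$), which is what the axiom requires. So $z$ satisfies every $Ea_S$-axiom, hence generates an $Ea_S$-generic filter over $S$.

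The main obstacle is making precise the matching between $j^S_E$ and $j^M_F$ on long sequences from $S$. This is exactly where the ``fully-backgrounded'' nature of the Mitchell--Steel construction is essential: $F$ is chosen so that its generators up to $\lambda(E)$ match those of $E$ and that ultrapowers by $F$ on the $M$-side and by $E$ on the $S$-side commute on sequences in $S$ of length $\lambda(E)$. Once this matching is set up correctly, the proof reduces to an instance of elementarity of the background embedding applied to the real $z$, together with $j^M_F(z) = z$, without any need to iterate $S$.
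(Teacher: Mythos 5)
The paper does not prove this statement; it is quoted as a fact from the cited source, so there is no in-paper argument to compare against. Your proposal is the standard proof of that fact (reals of $M$ are generic over the fully backgrounded construction of $M$), and it is correct in outline: the forward direction of each axiom is trivial, and the converse follows from the agreement of $i_E^S$ and $i_F^M$ below $\nu(E)$ together with $i_F^M(z)=z$ and elementarity of $i_F^M$. The one technical point you defer --- that $i_E^S(\vec{A})$ and $i_F^M(\vec{A})$ agree up to the generators of $E$ --- is exactly what the Mitchell--Steel background condition ($E_a = F_a \cap S$ for $a$ among the generators) delivers, so your identification of the ``main obstacle'' is accurate and the obstacle is surmountable. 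Two small things you leave implicit and should at least cite: the step from ``$z$ satisfies every axiom'' to ``$z$ is generic'' is Theorem 7.14 of \cite{ooimt} and requires that $\delta_S$ be Woodin in $S$, which holds because the fully backgrounded construction inherits Woodin cardinals from $M$ and the $S$-construction preserves this.
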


\begin{fact}
\label{strle in directed system}
    Suppose $x,y\in\R$, a real recursive in $y$ codes a complete iterate $\bar{M}$ of $M_n(x)$ by a countable iteration tree below $\delta_{M_n(x)}$, and $S = StrLe[M_n(y),x]$. Then $S$ is a complete iterate of $\bar{M}$ by a countable iteration tree below $\delta_{\bar{M}}$.
\end{fact}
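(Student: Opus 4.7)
The plan is to compare $\bar M$ with $S$ inside $M_n(y)$, using the canonical $M_n(x)$-iteration strategy that both structures inherit, and to show that the comparison exhibits $S$ as a complete iterate of $\bar M$ via a countable iteration tree below $\delta_{\bar M}$.

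First I would set up the comparison. Since a real recursive in $y$ codes $\bar M$, the mouse $\bar M$ sits inside $M_n(y)$ and is countable there. On the other hand $S = StrLe[M_n(y),x]$, by its definition together with standard facts about Steel's $S$-construction, is an $M_n(x)$-like mouse whose least Woodin is $\delta_{M_n(y)}$. Both $\bar M$ and $S$ inherit the canonical $M_n(x)$-iteration strategy: for $\bar M$ this is immediate from its being a complete iterate of $M_n(x)$; for $S$ it follows because any iteration tree on $S$ lifts through the background extenders to a tree on $M_n(y)$, and the resulting strategy must coincide with the canonical $M_n(x)$-strategy by uniqueness of iteration strategies for $M_n(x)$-like mice.

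Next I would run the coiteration of $\bar M$ and $S$ inside $M_n(y)$, producing trees $\T$ on $\bar M$ and $\mathcal{U}$ on $S$. Since both structures are countable in $M_n(y)$, the comparison terminates in countably many stages. The key claim is that $\mathcal{U}$ is trivial, so that the last model of $\T$ is $S$ itself. This is the universality of the fully-backgrounded $L[\vec E]$-construction carried through Steel's $S$-construction: any extender on $S$'s sequence used in the comparison has a full background in $M_n(y)$, and placing it on the $S$-side of a comparison with the countable mouse $\bar M$ would amount to demanding movement of $M_n(y)$ itself, which cannot happen. Since $\bar M$ and $S$ are both $M_n(x)$-like and so agree above their least Woodin, the tree $\T$ lives below $\delta_{\bar M}$, exhibiting $S$ as a countable complete iterate of $\bar M$ below $\delta_{\bar M}$.

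The main obstacle I expect is the universality claim for $S$ --- namely, that $\mathcal{U}$ is trivial. One must verify that the background-extender property of the Mitchell--Steel construction output $N$ survives Steel's $S$-construction well enough to block any extender on $S$'s sequence from being applied in coiteration with a countable mouse. Once this is in hand, the remaining ingredients (soundness and projection giving equality of last models, agreement above the Woodin giving the tree below $\delta_{\bar M}$, and countability) are standard. This is the same style of argument used in \cite{hra} and \cite{phdthesis} for analogous directed systems, and I would follow their template.
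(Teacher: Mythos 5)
The paper does not actually prove this fact: it is quoted directly from \cite{hra}. Your outline --- coiterate $\bar M$ against $S$, use the background extenders to argue that the $S$-side never moves, and conclude that $\bar M$ iterates to $S$ below the least Woodin --- is the standard template and is essentially how the cited source proceeds, so the overall strategy is right.

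Two points in your sketch need repair. First, $S$ is not countable in $M_n(y)$: its least Woodin is $\delta_{M_n(y)}$ itself, so $S$ is a definable class of $M_n(y)$, and the comparison terminates by the usual comparison lemma, not by a counting argument on two countable structures. The resulting tree on $\bar M$ is countable in $V$ because $\delta_{M_n(y)}$ is a countable ordinal in $V$ (the relevant part of $M_n(y)$ is the hull of $\{y\}$ and hence coded by a real), which is what the fact actually asserts. Relatedly, if you insist on running the coiteration inside $M_n(y)$, you must say where $M_n(y)$ gets the branches of trees on $\bar M$; for trees below $\delta_{\bar M}$ the strategy is guided by $M_{n-1}$-closed $Q$-structures, which $M_n(y)$ can identify below its own Woodin. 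Second, the ``main obstacle'' you flag --- whether the backgrounding survives Steel's $S$-construction --- largely dissolves once you note that an $M_n(x)$-like premouse is determined by its part below its least Woodin (it is the minimal proper-class mouse with the remaining Woodins over that part). Consequently the comparison of $\bar M$ with $S$ takes place entirely below the least Woodins, where $S$ coincides with the fully backgrounded output $N$, and the content is exactly Steel's theorem that fully backgrounded constructions are not moved in comparison with iterable mice that are small in the background universe. That theorem, together with the fact that both sides are proper-class $M_n(x)$-like premice, also yields the remaining items you labeled standard: no drops on the main branch of the tree on $\bar M$, and last model equal to $S$ on the nose rather than an initial segment.
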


Consider a premouse $M$ with at least $n$ Woodin cardinals and $x\in M \cap \R$. Let $\bar{S}$ be the first level of $StrLe[M,x]$ such that $L[\bar{S}]$ has $n$ Woodin cardinals. For any formula $\psi$, real $x\in M$, and finite sequence of ordinals $s$,  we write $M\models ``M_n(x) \models \psi(x,s)$'' to mean $L[\bar{S}]\models \psi(x,s)$. It is clear this is definable in $M$ from parameters in $\{x\}\cup s$. Moreover,

\begin{fact}
\label{strle formula definable}
    For $M$ as above, $k > 1$, and $\psi$ a $\Sigma_k$-formula, $M\models ``M_n(x) \models \psi(x,s)$'' is $\Sigma_k$-definable over $M$ from parameters in $\{x\}\cup s$.
\end{fact}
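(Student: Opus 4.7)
The approach is a complexity-tracking argument: I want to show that (i) identifying $\bar S$ inside $M$ and (ii) evaluating $L[\bar S]\models\psi(x,s)$ can both be carried out by formulas whose complexity does not exceed that of $\psi$, provided $k\geq 2$.

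First, I would argue that the StrLe hierarchy $\langle S_\alpha : \alpha < \mathrm{Ord}^M\rangle$ is $\Delta_1$-definable over $M$ from the parameter $x$. Its two ingredients — the fully backgrounded Mitchell-Steel construction over $x$ performed in $M|\delta_M$ and Steel's $S$-construction on top — are both standard recursions with $\Delta_1$-definable level-by-level behavior, so for each $z\in M$ the predicate ``$z\in S_\alpha$'' is $\Delta_1$ over $M$ uniformly in $x$ and $\alpha$. The least $\bar\alpha$ such that $L[S_{\bar\alpha}]$ has $n$ Woodin cardinals is then definable over $M$ from $x$ by a formula of bounded complexity — at most $\Sigma_2$ — since having $n$ Woodins is essentially $\Pi_2$ in the underlying sequence and the minimization is bounded by an ordinal definable from $\delta_M$.

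Second, once $\bar S$ is identified, $L[\bar S]\models \psi(x,s)$ amounts to the relativization $\psi^{L[\bar S]}(x,s)$, obtained by replacing each quantifier in $\psi$ by one bounded to $L[\bar S]$. The predicate ``$y\in L[\bar S]$'' is $\Sigma_1$ over $M$ in $\bar S$ (witness: a level $L_\beta[\bar S]$ containing $y$), so each bounded existential quantifier remains $\Sigma_1$ over the matrix, and each bounded universal quantifier contributes only $\Pi_1$ overhead. For a $\Sigma_k$-formula $\psi$ with $k\geq 2$, chaining these rewrites through the matrix of $\psi$ leaves a $\Sigma_k$-formula over $M$ in the parameters $\{x\}\cup s$.

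The main obstacle is precisely this bookkeeping — and it is where the hypothesis $k>1$ becomes essential. For $k=1$, the $\Sigma_1$-overhead of both locating $\bar S$ and bounding quantifiers to $L[\bar S]$ would push the complexity up to $\Sigma_2$. For $k\geq 2$ there is already enough quantifier alternation in $\psi$ to absorb this overhead without cost, so the resulting formula over $M$ stays $\Sigma_k$, as claimed.
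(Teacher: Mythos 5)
The paper states this as a Fact with no proof (it is presented as routine, with a pointer to \cite{hra} only for the two preceding facts), so there is no official argument to compare against. Your two-step plan --- (i) the level $\bar{S}$ of the $StrLe$ construction is definable over $M$ from $x$ with low complexity, and (ii) relativizing a $\Sigma_k$ formula to the inner model $L[\bar{S}]$ costs nothing once $k\geq 2$ --- is surely the intended argument, and step (ii) is handled essentially correctly.

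The problem is that your complexity accounting in step (i) does not close as written. You assert that ``$L[S_\alpha]$ has $n$ Woodin cardinals'' is ``essentially $\Pi_2$'' and then conclude that the least such level is $\Sigma_2$-definable; these are incompatible. The leastness clause ``for all $\beta<\bar{\alpha}$, $L[S_\beta]$ does \emph{not} have $n$ Woodins'' is a universal quantifier over the negation of a $\Pi_2$ condition, i.e.\ $\Pi_1$ over $\Sigma_2$, which is $\Pi_3$; together with the positive $\Pi_2$ clause this would make ``$z=\bar{S}$'' properly $\Pi_3$ and the whole statement $\Sigma_4$, so the Fact would fail for $k=2$ and $k=3$. (Bounding the ordinal search by something definable from $\delta_M$ does not help, since the offending quantifiers range over sets, not over the index $\beta$.) What actually rescues the argument is that in these fine-structural models Woodinness is witnessed locally by the extender sequence and $P(\delta)$ is captured by a bounded level of the hierarchy, so ``$L[S_\alpha]$ has $n$ Woodin cardinals'' is a bounded, $\Delta_1$ condition; then ``$z=\bar{S}$'' is $\Delta_2$ and $\exists z\,(z=\bar{S}\wedge\psi^{L[z]}(x,s))$ is $\Sigma_k$ for $k\geq 2$. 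The same point is needed for $\delta_M$ itself (``least Woodin of $M$''), which your construction uses but never accounts for. Two smaller corrections: relativizing quantifiers to a $\Sigma_1$-definable transitive class does not by itself push $\Sigma_1$ to $\Sigma_2$ (an existential over a $\Sigma_1$ matrix is still $\Sigma_1$), so your diagnosis of where $k>1$ is used is misplaced --- the entire cost comes from pinning down $\delta_M$ and $\bar{S}$; and the absoluteness of the $\Delta_0$ matrix must be checked in the language of premice, where the extender-sequence predicate of $L[\bar{S}]$ has to be substituted from the parameter $\bar{S}$, not merely the membership relation.
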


Let us justify this notation. Suppose $M$ is a $y$-mouse with at least $n$ Woodin cardinals and no extenders indexed at or above $u_0$. Let $x\in M \cap \R$ and $\bar{S}$ be as above. Then $L[\bar{S}]$ is an $M_n(x)$-like mouse (with no extenders indexed at or above $u_0$). In particular, the first order theory of the uniform indiscernibles in $L[\bar{S}]$ is the same as the first order theory of the uniform indiscernibles in $M_n(x)$. So if $s = (u_0,..., u_m)$, then $M\models ``M_n(x) \models \psi(x,s)$'' if and only if $M_n(x) \models \psi(x,s)$.

\section{The Proof}

In this section we prove Theorem \ref{main thm}.

Fix $\Gam = \Gam_{2n+1,m,k}$ for some $n,m\in\omega$ and $k\in\omega\backslash\{0\}$. Let $b = b_{2n+1,m,k+1}$ and let $s = (u_0,...,u_{m-1})$ be the sequence of the first $m$ uniform indiscernibles. Towards a contradiction, suppose there exists a sequence $\langle A_\alpha : \alpha < b\rangle$ of distinct sets in $\Gam$.

Let $U\subset \R \times \R$ be a universal $\Gam$-set. Fix $x_0\in\R$ such that $U\in \Gamma_{2n+1,m,k}(x_0)$ and let $\psi_U$ be a formula realizing $U\in \Gamma_{2n+1,m,k}(x_0)$ (so $(x,y)\in U$ if and only if $M_{2n}[x,y,x_0]\models \psi_U(x,y,x_0,s)$). Let $B_\alpha = \{y: U_y = A_\alpha\}$.

Let $\I = \I(M_{2n+1})$. Let $\J = \{(P,\xi): P\in\I \wedge \xi < \delta_P\}$. Define the relation $\leq^{**}$ on $\J$ by $(P,\xi) \leq^{**} (Q,\zeta)$ if and only if $\pi_{P,R}(\xi)\leq \pi_{Q,R}(\zeta)$, where $R$ is a common iterate of $P$ and $Q$. Let $\leq^*\in\bf{\Delta^1_{2n+3}}$ be an initial segment of $\leq^{**}$ of length $b$.\footnote{This exists by the results of \cite{pwoim}.} The coding lemma gives $D^*\subset \R\times \R$, $D^*\in \bf{\Sigma^1_{2n+3}}$ such that
\begin{enumerate}
    \item $y^1\in dom(\leq^*) \implies D^*_{y^1} \neq \emptyset$ and
    \item $(y^1,y^2)\in D^* \implies y^2\in B_{|y^1|_{\leq^*}}$.    
\end{enumerate}
Let $D\subset \R\times\R\times\R$, $D\in \bf{\Pi^1_{2n+2}}$, be such that $(y^1,y^2)\in D^*$ if and only if $(\exists y^3)(y^1,y^2,y^3)\in D$. Replacing $x_0$ if necessary, we may assume $D\in \Pi^1_{2n+2}(x_0)$. Let $\psi_D$ be a formula such that $(y^1,y^2,y^3)\in D \longleftrightarrow M_{2n}[y^1,y^2,y^3,x_0] \models \psi_D(y^1,y^2,y^3,x_0)$.

Let $\I' = \I (M_{2n+1}(x_0))$.

\begin{definition}
    Say $M \in \mathcal{I}'$ is locally $\alpha$-stable if there is $\xi\in M$ such that $\pi_{\H^M,\infty}(\xi)=\alpha$. Write $\alpha_M$ for this ordinal $\xi$.
\end{definition}

\begin{definition}
    Say $M\in \mathcal{I}'$ is $\alpha$-stable if $M$ is locally $\alpha$-stable and whenever $N$ is a complete iterate of $M$, $\pi_{M,N}(\alpha_M) = \alpha_N$.
\end{definition}

\begin{lemma}
    For any $\alpha < b$, there exists an $\alpha$-stable $M\in \I'$.
\end{lemma}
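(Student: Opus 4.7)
The plan is to produce a suitable $M\in\I'$ in two stages: first obtain local $\alpha$-stability by representing $\alpha$ in the external direct limit $\H^\infty$ of $\I$ and transferring this representation into an iterate of $M_{2n+1}(x_0)$; then promote this to full $\alpha$-stability by ensuring coherence between the iteration of the $x_0$-premouse $M$ and the iteration of $\H^M$ inside the directed system.

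For the first stage, I would use $\alpha<b=\sup_{x\in\R}\gamma^{2n+1}_{m,k+1,x,\infty}$ to pick $x\geq_T x_0$ with $\alpha<\pi_{M_{2n+1}(x),\infty}(\gamma^{M_{2n+1}(x)}_{m,k+1})$. The $\Sigma_{k+1}$-Skolem hull of $s$ in $M_{2n+1}(x)$ is countable in $V$, so its intersection with $\delta_{M_{2n+1}(x)}$ has $V$-cofinality $\omega$ and $\pi_{M_{2n+1}(x),\infty}$ is continuous at its supremum. Together with the construction of $\H^\infty$ as a direct limit---so that every ordinal in $\H^\infty$ is of the form $\pi_{Q,\infty}(\beta)$ for some $Q\in\I$ and $\beta\in Q$---this lets me pick $Q\in\I$ and $\beta\in Q$ with $\pi_{Q,\infty}(\beta)=\alpha$. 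Next I would iterate $M_{2n+1}(x_0)$ to some $M\in\I'$ chosen so that $Q\in\I^M$; this is possible because $\I^M=\I\cap M|\nu$ contains any fixed $Q\in\I$ once $M$ has been iterated far enough. Setting $\alpha_M:=\pi_{Q,\H^M}(\beta)\in\H^M\subseteq M$ then yields $\pi_{\H^M,\infty}(\alpha_M)=\pi_{Q,\infty}(\beta)=\alpha$, so $M$ is locally $\alpha$-stable.

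The remaining---and I expect the main---obstacle is promoting this to full $\alpha$-stability: for every complete iterate $N$ of $M$ one needs $\pi_{M,N}(\alpha_M)=\alpha_N$. Since $\pi_{\H^N,\infty}$ is injective, $\alpha_N$ is unique when it exists, and coherence of the external direct limit already gives $\pi_{\H^M,\H^N}(\alpha_M)=\alpha_N$. Hence stability reduces to showing that $\pi_{M,N}\upharpoonright\H^M$ agrees with the internal iteration map $\pi_{\H^M,\H^N}$ on the specific ordinal $\alpha_M$---agreement between the iteration of the $x_0$-premouse $M$ and the iteration within $\I$ as seen internally. My plan is to establish this by choosing $M$ so that its further iterations respect the internal directed system, using Fact~\ref{direct limit approx definable} and the $\mathrm{StrLe}$ analysis to identify $\pi_{M,N}$-images of internal iterates with the corresponding members of $\I^N$. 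If a direct construction does not suffice, the fallback is a pressing-down argument: if no further iterate of $M$ were $\alpha$-stable, repeatedly replacing $M$ by a witnessing $N$ would produce a strictly descending sequence of ordinals associated with $\alpha$ in successive iterates, contradicting the well-foundedness of $\H^\infty$ together with the fact that $b$ is a cardinal (Theorem~\ref{b is cardinal}).
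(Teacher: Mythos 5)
The paper itself gives no argument here---it defers entirely to Lemma 1.17 of \cite{hra}---so your proposal has to be judged on its own terms. Its two-stage skeleton (first local $\alpha$-stability, then promotion by a descending-sequence argument) is indeed the standard shape of that lemma, but both stages have genuine gaps. In stage one, the step ``iterate $M_{2n+1}(x_0)$ far enough that $Q\in\I^M$'' does not work: iterating below the Woodin adds no reals and no prescribed external objects, so the members of $\I$ appearing in $N|\nu^N$ for $N\in\I'$ are only those generated from $M_{2n+1}(x_0)$'s own data, and a $Q\in\I$ representing a large $\alpha<b$ need not be among them for any iterate whatsoever. The machinery the paper sets up exists precisely to get around this: one takes a real $w$ coding $Q$ (and $x_0$), forms $M=\mathrm{StrLe}[M_{2n+1}(w),x_0]$, which lies in $\I'$ by Fact \ref{strle in directed system}, notes that $w$ is $Ea_M$-generic by Fact \ref{generic for strle}, and then applies Fact \ref{direct limit approx definable} to $Q\in M[w]\cap\I$ to conclude that $\H^M$ is a complete iterate of $Q$ and set $\alpha_M=\pi_{Q,\H^M}(\beta)$. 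You cite these facts only in passing for stage two, but they are what stage one actually needs.

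In stage two, the fallback pressing-down argument is the right idea but is missing its engine. You correctly observe that $\pi_{\H^M,\H^N}(\alpha_M)=\alpha_N$, so failure of stability at $N$ means $\pi_{M,N}(\alpha_M)\neq\alpha_N$; but to get a \emph{strictly descending} sequence you need the inequality $\alpha_N\leq\pi_{M,N}(\alpha_M)$, i.e.\ that the iteration map $\pi_{\H^M,\H^N}$ is pointwise dominated by the elementary embedding $\pi_{M,N}\upharpoonright\H^M:\H^M\to\H^N$. That is a Dodd--Jensen-style minimality argument, and without it the witnessing ordinals could oscillate rather than descend. Two smaller inaccuracies: the well-founded object being pressed on is the direct limit of $\I'$ (one compares the ordinals $\pi_{N_i,\infty}(\alpha_{N_i})$ computed in the system of iterates of $M_{2n+1}(x_0)$), not $\H^\infty$; and Theorem \ref{b is cardinal} plays no role in this lemma. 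Your primary plan for stage two (``choose $M$ so that further iterations respect the internal system'') is not a proof strategy as stated; the pressing-down argument, once the domination inequality is supplied, is the actual proof.
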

\begin{proof}
    See Lemma 1.17 of \cite{hra}.
\end{proof}

\begin{definition}
    Suppose $M\in\I'$ is $\alpha$-stable and $p\in Ea_M$. Say $p$ is $\alpha$-good if $p$ forces the generic is a tuple $(ea^1,ea^2,ea^3)$ such that
    \begin{enumerate}
        \item \label{first part of alpha-good} $|ea^1|_{\leq^*} = \alpha$ and
        \item \label{second part of alpha-good} $M_{2n}[ea^1,ea^2,ea^3,x_0] \models \psi_D(ea^1,ea^2,ea^3,x_0)$.
    \end{enumerate}
\end{definition}

Being $\alpha$-good is first order. For part \ref{first part of alpha-good} of the definition, this follows from Fact \ref{direct limit approx definable}.

For $M\in \I'$, let $Th^M_k$ be the $\Sigma_k$-theory of $M$ with parameters in $M|\delta_M \cup s$. Let $\kappa_M$ be the least cardinal of $M$ such that $M\models$ ``$\kappa_M$ is $<\delta_M$-strong with respect to $Th^M_k$.'' Let $\kappa_\infty = \pi_{M,\infty}(\kappa_M)$ for some (equivalently any) $M\in \I'$.

\begin{lemma}
\label{least strong below b}
    $\kappa_\infty < b$
\end{lemma}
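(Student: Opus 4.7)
The plan is to show that $\kappa_M$ lies in $Hull^M_{k+1}(s)$ for every $M\in\I'$, deduce $\kappa_M<\gamma^M_{m,k+1}$, and then push the inequality forward by the iteration embedding $\pi_{M,\infty}$.

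The heart of the argument is a complexity computation over $M$. The predicate $Th^M_k$ is uniformly $\Sigma_k$-definable over $M$ with parameter $s$. The assertion ``$\kappa$ is $<\delta_M$-strong with respect to $Th^M_k$'' unfolds as
\[
\forall\lambda<\delta_M\;\exists E\in M\;\bigl(E\text{ is an extender with }\text{crit}(E)=\kappa,\ i_E(\kappa)>\lambda,\text{ and }i_E(Th^M_k)\cap V_\lambda=Th^M_k\cap V_\lambda\bigr).
\]
Agreement of the $\Sigma_k$-theories on $V_\lambda$ is $\Pi_k$ in $(\lambda,E,s)$, the remaining clauses on $E$ are bounded, and the alternation $\forall\lambda\,\exists E$ contributes one further quantifier, so the whole predicate is $\Pi_{k+1}$ over $M$ in the parameter $s$. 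The least cardinal with this property is therefore $\Sigma_{k+1}$-definable over $M$ from $s$, giving $\kappa_M\in Hull^M_{k+1}(s)$.

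To upgrade this to a strict inequality, note that $(\kappa_M^+)^M$ is also $\Sigma_{k+1}$-definable from $s$ --- it is the least $M$-cardinal above $\kappa_M$, and $k+1\geq 2$ --- and $(\kappa_M^+)^M<\delta_M$ because $\delta_M$ is inaccessible in $M$. Hence $\gamma^M_{m,k+1}\geq(\kappa_M^+)^M>\kappa_M$.

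Finally, take $M=M_{2n+1}(x_0)\in\I'$. The iteration map $\pi_{M,\infty}$ into the direct limit of $\I'$ is fully elementary, so
\[
\kappa_\infty=\pi_{M,\infty}(\kappa_M)<\pi_{M,\infty}(\gamma^M_{m,k+1})=\gamma^{2n+1}_{m,k+1,x_0,\infty}\leq b_{2n+1,m,k+1}=b,
\]
the final inequality coming from taking $x=x_0$ in the supremum defining $b$. The main obstacle is the complexity bookkeeping of the second paragraph: one must carefully verify that the quantifier block ``$\forall\lambda\,\exists E$'' combined with the $\Sigma_k$-preservation condition on extenders really lands at $\Pi_{k+1}$ rather than slipping to $\Pi_{k+2}$, since the whole payoff of the lemma is the $+1$ on $k$ that opens up room between $Hull^M_{k+1}(s)$ and the pointclass $\bf{\Gamma_{2n+1,m,k}}$ being bounded in the main theorem.
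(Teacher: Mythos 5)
Your skeleton is the paper's own: show $\kappa_M$ is captured by the $\Sigma_{k+1}$-hull of $s$ over $M$, conclude $\kappa_M<\gamma^M_{m,k+1}$, and push forward with the elementary map $\pi_{M,\infty}$, taking $x=x_0$ in the supremum defining $b$. The third paragraph and the final chain of inequalities are fine. The gap is in the complexity bookkeeping, which you correctly single out as the crux but do not carry out correctly. First, the count itself: a block $\forall\lambda<\delta_M\,\exists E$ in front of a $\Pi_k$ matrix is not $\Pi_{k+1}$ under any convention --- prenexed naively it is $\Pi_{k+2}$, and using collection in the ZFC model $M$ to absorb the bounded quantifiers it comes out $\Sigma_{k+1}$ (in fact $\Delta_{k+1}$, since $E$ ranges over $M|\delta_M$ so every quantifier is bounded, and the agreement clause is a Boolean combination of $\Sigma_k$ facts: pinning down the set $Th^M_k\cap M|\lambda$ already requires both a $\Sigma_k$ and a $\Pi_k$ direction, so it is not outright $\Pi_k$ as you claim). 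Second, and more seriously, the inference ``the property is $\Pi_{k+1}$, therefore its least instance is $\Sigma_{k+1}$-definable'' is false as a general principle: the least ordinal satisfying a $\Pi_{k+1}$ condition is in general only $(\Sigma_{k+1}\wedge\Pi_{k+1})$-definable, i.e.\ $\Delta_{k+2}$, which would put $\kappa_M$ only in $Hull^M_{k+2}(s)$ and yield $\kappa_\infty<b_{2n+1,m,k+2}$ --- losing exactly the ``$+1$'' the lemma is about. The minimization step preserves level $k+1$ for $\Sigma_{k+1}$ (or $\Delta_{k+1}$) properties, not for $\Pi_{k+1}$ ones: if $P(\kappa)=\exists y\,\theta(\kappa,y)$ with $\theta\in\Pi_k$, the $<_M$-least pair $(\kappa,y)$ satisfying $\theta$ (ordered first by $\kappa$) is defined by a $(\Pi_k\wedge\Sigma_k)$-condition, so its first coordinate $\kappa_M$ lies in $Hull^M_{k+1}(s)$; no such trick exists on the $\Pi$ side.

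The repair is routine once the count is corrected. Either argue as above that the strongness property and hence its least instance are $\Delta_{k+1}$ over $M$ in the parameters $s$ (and the constant $x_0$), so $\kappa_M\in Hull^M_{k+1}(s)$ and your $(\kappa_M^+)^M$ argument for strictness goes through; or sidestep definability of $\kappa_M$ entirely: the property is $\Sigma_{k+1}$ and holds of some cardinal below $\delta_M$, so by $\Sigma_{k+1}$-elementarity of $Hull^M_{k+1}(s)$ in $M$ there is $\kappa^*\in Hull^M_{k+1}(s)\cap\delta_M$ with the property, whence $\kappa_M\le\kappa^*<\kappa^*+1\le\gamma^M_{m,k+1}$. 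Note that your strictness step via $(\kappa_M^+)^M$ presupposes $\kappa_M$ itself is in the hull, so it inherits the same gap and is only rescued by the first repair.
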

\begin{proof}
    The lemma is immediate from the fact that for $M\in \I'$, $\kappa_M$ is $\Sigma_{k+1}$-definable in $M$ from parameters in $\{x_0\} \cup s$.
\end{proof}

For $\alpha < b$ and $\alpha$-stable $M\in \I'$, let $S^M_\alpha$ be the set of conditions $q\in Ea_M$ such that there exists $r\in Ea_M$ satisfying
\begin{enumerate}[a)]
    \item $(q,r) \Vdash^M_{Ea_M\times Ea_M} ``M_{2n}[ea_l,ea_r^2,x_0] \models \psi_U(ea_l,ea_r^2,x_0,s)$,''
    \item $r$ is $\alpha$-good, and
    \item $q\in M|\kappa_M$.
\end{enumerate}

Let $S_\alpha = \pi_{M,\infty}(S^M_\alpha)$ for some (equivalently any) $\alpha$-stable $M\in \I'$. Note $S_\alpha$ is coded by an element of $P(\kappa_\infty)^{M'_\infty}$. But
\begin{align}
\label{powerset smaller than b}
    |P(\kappa_\infty)|^{M'_\infty} \leq (\kappa_\infty^+)^{M'_\infty} < \kappa^+_\infty \leq b
\end{align}

The second inequality holds because $(\kappa^+_\infty)^{M'_\infty}$ has cofinality $\omega$ since $\kappa_\infty$ is not measurable in $M'_\infty$. The last inequality holds by Theorem \ref{b is cardinal} and Lemma \ref{least strong below b}.

If $\alpha \neq \beta$ implies $S_\alpha \neq S_\beta$, then (\ref{powerset smaller than b}) contradicts that our sequence $\langle A_\alpha: \alpha < b\rangle$ had cardinality $b$. We will show the coding sets are distinct by reconstructing the original set $A_\alpha$ from its code $S_\alpha$.

Let $A'_\alpha$ be the set of $x\in\R$ such that $\exists M\in \I', q\in Ea_M$ satisfying
\begin{enumerate}
    \item $q\in S^M_\alpha$
    \item $x\models q$
    \item $x$ is $Ea_M$-generic over $M$
\end{enumerate}

It remains to show the following lemma:

\begin{lemma}
    $A'_\alpha = A_\alpha$
\end{lemma}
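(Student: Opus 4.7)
The plan is to prove both inclusions separately. For $A'_\alpha\subseteq A_\alpha$, I take $x\in A'_\alpha$ with witnesses $M\in\I'$ and $q\in S^M_\alpha$, and let $r\in Ea_M$ be a right-hand witness from the definition of $S^M_\alpha$. Extending to a real $y$ that is $Ea_M$-generic over $M[x]$ below $r$ makes $(x,y)$ an $Ea_M\times Ea_M$-generic pair over $M$ below $(q,r)$. Clause (a), together with Fact \ref{strle formula definable} to transfer ``$M_{2n}[x,y^2,x_0]\models\psi_U$'' out of $M[x,y]$, gives $(x,y^2)\in U$; clause (b) forces $y$ to code a triple $(y^1,y^2,y^3)\in D$ with $|y^1|_{\leq^*}=\alpha$, so $y^2\in B_\alpha$, $U_{y^2}=A_\alpha$, and hence $x\in A_\alpha$.

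For the converse $A_\alpha\subseteq A'_\alpha$, given $x\in A_\alpha$ I would first fix witness reals using the coding lemma setup: pick $y^1$ with $|y^1|_{\leq^*}=\alpha$ and then $y^2,y^3$ with $(y^1,y^2,y^3)\in D$ and $y^2\in B_\alpha$, so that $x\in U_{y^2}=A_\alpha$. A genericity iteration of $M_{2n+1}(x_0)$, combined with the existence of $\alpha$-stable iterates and the preservation of $\alpha$-stability under further iteration, produces an $\alpha$-stable $M\in\I'$ over which $(x,y)$ is $Ea_M\times Ea_M$-generic, where $y=(y^1,y^2,y^3)$. The forcing theorem inside $M[x,y]$ then yields $r\in G_r$ forcing $\alpha$-goodness---using Fact \ref{direct limit approx definable} for clause (1) and Fact \ref{strle formula definable} for clause (2)---together with some $q_0\in G_l$ such that $(q_0,r)\Vdash M_{2n}[ea_l,ea_r^2,x_0]\models\psi_U$.

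The main obstacle is matching clause (c) of $S^M_\alpha$, which demands $q\in M|\kappa_M$, whereas $q_0$ may have rank well above $\kappa_M$. My plan is to exploit the strongness of $\kappa_M$ with respect to $Th^M_k$: the set of $q\in Ea_M$ admitting an $\alpha$-good $r$ with $(q,r)\Vdash\psi_U$ is $\Sigma_k$-definable (or at worst $\Sigma_{k+1}$) over $M$ with parameters in $M|\delta_M\cup s\cup\{x_0\}$, since both the extender-algebra forcing relation for $\psi_U$ (interpreted via StrLe per Fact \ref{strle formula definable}) and $\alpha$-goodness fall at that complexity. Choosing an extender $E$ on the $M$-sequence witnessing strongness of $\kappa_M$ for these parameters should reflect this definable set densely into $Ea_M\cap M|\kappa_M$ below $q_0$, so the generic $G_l$ must contain some $q\leq q_0$ of the required form. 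Then $q\in S^M_\alpha$ and $x\models q$, yielding $x\in A'_\alpha$. The delicate technical step will be carrying out this reflection cleanly, in particular choosing the extender so that its action fixes all parameters needed to interpret $\psi_U$ and the rank function $|\cdot|_{\leq^*}$.
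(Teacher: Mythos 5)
Your forward inclusion ($A'_\alpha\subseteq A_\alpha$) matches the paper's argument. The backward inclusion, however, has two genuine gaps. First, a genericity iteration cannot in general produce a single $\alpha$-stable $M\in\I'$ over which the pair $(x,y)$ is $Ea_M\times Ea_M$-generic: mutual genericity fails, for instance, whenever $y$ is recursive in $x$, and with one Woodin cardinal one can only arrange that the join is generic for a single copy of the extender algebra. The paper avoids this by setting $P=M_{2n+1}(x,y,z)$ (with $z$ coding an $\alpha$-stable $\bar M$) and $S=StrLe[P,x_0]$, so that $x$ and $y$ are each \emph{separately} $Ea_S$-generic over $S$ (Fact \ref{generic for strle}) and $S$ is a complete iterate of $\bar M$ (Fact \ref{strle in directed system}); the product-forcing statement $(q,r)\Vdash\psi_U$ is then obtained not from a mutually generic pair but by quantifying over all $y_0$ that are $Ea_S$-generic over $S[x]$ and satisfy an $\alpha$-good $r$, noting that each such $y_0$ has $U_{y_0^2}=A_\alpha\ni x$, and applying the forcing theorem over $S[x]$.

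Second, your mechanism for clause (c) is not the right use of strongness. Reflecting the set $W$ of good left conditions by an extender $E$ with critical point $\kappa_M$ gives $i_E(W\cap M|\kappa_M)\supseteq W\cap M|\zeta$, which says nothing about $W\cap M|\kappa_M$ being dense, or even predense, below $q_0$ in the extender algebra; conditions of rank below $\kappa_M$ need not extend $q_0$ at all, so there is no reason the generic determined by $x$ should meet $W\cap M|\kappa_M$. What is actually needed is to reflect the existential statement ``there exist $q,r\in T\cap S|\kappa'$ with $x\models q$ and $y\models r$,'' using the reals $x,y$ as parameters fixed by the extender. That requires an ambient model containing $x$, $y$, $S$, and the relevant (background) extenders --- exactly the role played by $P=M_{2n+1}(x,y,z)$ in the paper, together with the subclaim comparing the least cardinal strong with respect to $T$ in $P$ with $\kappa_S$; after a bare genericity iteration no such model is available, since $x$ and $y$ are not elements of $M$. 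Relatedly, your fallback that the relevant set is ``at worst $\Sigma_{k+1}$'' would break the theorem: $\kappa_M$ is strong only with respect to $Th^M_k$, and the paper keeps the reflected set $T$ at the level of a section of $Th^S_k$ precisely by leaving $\alpha$-goodness of $r$ out of $T$ and restoring it in a separate final step after $q$ has been pushed below $\kappa_S$.
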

\begin{proof}
    First suppose $x\in A'_\alpha$. Fix $M$ and $q$ realizing $x\in A'_\alpha$ and $r$ realizing $q\in S^M_\alpha$. The choice of $r$ implies there exists a real $y$ which is $Ea_M$-generic over $M[x]$ such that $y\models r$.
    \begin{align*}
        (x,y)\models (q,r) &\implies M[x][y] \models ``M_{2n}[x,y^2,x_0]\models \psi_U(x,y^2,x_0,s)"\\
        &\implies M_{2n}[x,y^2,x_0]\models \psi_U[x,y^2,x_0,s]\\
        &\implies x\in U_{y^2}
    \end{align*}
    
    But $r$ is $\alpha$-good, so $U_{y^2} = A_\alpha$.\\

    Now suppose $x\in A_\alpha$. Fix $y = (y^1,y^2,y^3)$ such that $U_{y^2} = A_\alpha$ and $(y^1,y^2,y^3)\in D$. Let $\bar{M}\in\I'$ be $\alpha$-stable, $z\in \R$ code $\bar{M}$, $P = M_{2n+1}(x,y,z)$, and $S = StrLe[P,x_0]$. The next two claims are immediate from Facts \ref{generic for strle} and \ref{strle in directed system} above.

    \begin{claim}
        $x$ and $y$ are each $Ea_S$-generic over $S$.
    \end{claim}

    \begin{claim}
        $S$ is a complete iterate of $\bar{M}$
    \end{claim}

    \begin{claim}
        There are conditions $q,r\in Ea_S$ such that $x\models q$, $y\models r$, and $(q,r) \Vdash^S_{Ea_S\times Ea_S} ``M_{2n}[ea_l,ea_r^2,x_0]\models \psi_U(ea_l,ea_r^2,x_0,s)."$
    \end{claim}
    \begin{proof}
        $y$ satisfies some $\alpha$-good condition $r$. Let $y_0\in\R$ be $Ea_M$-generic over $S[x]$ such that $y_0\models r$. Then $y_0 = (y_0^1,y_0^2,y_0^3)$, where $(y_0^1,y_0^2,y_0^3)\in D$ and $|y^1_0|_{\leq_*} = \alpha$. This gives $U_{y_0^2} = A_\alpha$ from the definition of $D$. Thus $x\in U_{y_0^2}$.
        \begin{align}
            x\in U_{y_0^2} &\implies M_{2n}[x,y_0^2,x_0] \models \psi_U(x,y_0^2,x_0,s)\\
            &\label{U holds in extension of S by pair}\implies S[x][y_0]\models ``M_{2n}[x,y_0^2,x_0]\models \psi_U(x,y_0^2,x_0,s)"
        \end{align}

        Since (\ref{U holds in extension of S by pair}) holds for any $y_0$ which is $Ea_S$-generic over $S[x]$ and satisfies $r$, there is $q\in Ea_S$ such that $x\models q$ and $(q,r) \Vdash^S_{Ea_S\times Ea_S} ``M_{2n}[ea_l,ea_r^2, x_0]\models \psi_U(ea_l,ea_r^2,x_0,s)"$.
    \end{proof}

    \begin{claim}
           There are conditions $q,r\in Ea_S$ such that $q,r\in S|\kappa_S$, $x\models q$, $y\models r$, and $(q,r) \Vdash^S_{Ea_S\times Ea_S} ``M_{2n}[ea_l,ea_r^2,x_0]\models \psi_U(ea_l,ea_r^2,x_0,s)."$
    \end{claim}
    \begin{proof}
        The previous claim gives conditions $q',r'$ satisfying everything except $q',r'\in S|\kappa_S$. Let $T$ be the set of pairs of conditions $(\bar{q},\bar{r})\in Ea_{StrLe[P,x_0]}\times Ea_{StrLe[P,x_0]}$ such that
        \begin{align*}
            (\bar{q},\bar{r}) \Vdash^{StrLe[P,x_0]}_{Ea_{StrLe[P,x_0]}\times Ea_{StrLe[P,x_0]}} ``M_{2n}[ea_l,ea_r^2,x_0] \models \psi_U(ea_l,ea_r^2,x_0,s)."
        \end{align*}

        Recall $S = StrLe[P,x_0]$, so $(q',r')\in T$. Let $\kappa'$ be the least cardinal of $P$ which is $<\delta_P$-strong in $P$ with respect to $T$.

        \begin{subclaim}
            $\kappa'\leq\kappa_S$
        \end{subclaim}
        \begin{proof}
            Since $\delta_S = \delta_P$, it suffices to show $\kappa_S$ is $<\delta_P$-strong in $P$ with respect to $T$. Note             
            \begin{align}
                \label{T is section of Th} T = \{a: (j,a)\in Th^S_k\} \text{ for some } j\in \omega.
            \end{align}

            Let $\zeta < \delta_S$ and let $E$ be an extender in $S$ such that $E$ realizes $\kappa_S$ is $\zeta$-strong with respect to $Th^S_k$. Let $E^*$ be the extender in $P$ which backgrounds $E$. Then
            \begin{align*}
                i_{E^*}(T) \cap P|\zeta &= i_E(T) \cap S|\zeta\\
                &= T\cap S|\zeta\\
                &= T\cap P|\zeta.
            \end{align*}

            The first and third equalities hold because $T\subset S$, the second because $i_E$ moves $T$ correctly by (\ref{T is section of Th}). Since $\zeta$ was arbitrary, $\kappa'\leq \kappa_S$.
        \end{proof}

        Fix $\zeta < \delta_S$ such that $q',r'\in P|\zeta$ and an extender $E\in P$ such that $crit(E) = \kappa'$ and $E$ is $\zeta$-strong with respect to $T$. Then
        \begin{align*}
            i_E(T\cap S|\kappa') \supset T \cap S|\zeta \supset \{(q',r')\}.
        \end{align*}

        In particular,
        \begin{align*}
            Ult(P,E) \models ``\exists q,r\in i_E(T\cap S|\kappa') \text{ s.t. } x\models q \wedge y \models r."
        \end{align*}

        Then by elementarity,
        \begin{align*}
            P \models ``\exists q,r\in T\cap S|\kappa' \text{ s.t. } x\models q \wedge y \models r."
        \end{align*}            
    \end{proof}

    It remains to show the condition $r$ from the previous claim can be extended to an $\alpha$-good condition. It suffices to show the following claim:

    \begin{claim}
        There is an $\alpha$-good condition $p\in Ea_S$ such that $y\models p$.
    \end{claim}
    \begin{proof}
        Recall $y$ is a triple $(y^1,y^2,y^3)$ such that $|y^1|_{\leq_*} = \alpha$ and $(y^1,y^2,y^3)\in D$. In particular, $|y^1|_{\leq_*} = \alpha$ and $M_{2n}(y^1,y^2,y^3,x_0) \models \psi_D(y^1,y^2,y^3,x_0)$. Both are first order in $S[y]$, so there is $p\in Ea_S$ forcing these such that $y\models p$. $p$ is the desired $\alpha$-good condition.
    \end{proof}
\end{proof}

\section{Open Problems}

Many interesting questions about unreachability remain, even for pointclasses between the levels of the projective hierarchy. Harrington proved there is no uncountable sequence of distinct $\bf{\Gamma}$-sets for any $\bf{\Gamma} < \bf{\Delta^1_1}$ (see \cite{Harrington}). We are hardly the first to suspect the same may hold for $\bf{\Delta^1_3}$:

\begin{conjecture}
    There is no pointclass $\bf{\Gamma} <_w \bf{\Delta^1_3}$ such that $\aleph_{\omega+1}$ is $\bf{\Gamma}$-reachable.
\end{conjecture}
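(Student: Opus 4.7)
The plan is to push the directed-systems argument from the proof of Theorem \ref{main thm} one Wadge rank higher, up to (but not reaching) $\bf{\Delta^1_3}$. Corollary \ref{cor for n=0} already shows $\aleph_{\omega+1}$ is $\bf{\Gamma_1}$-unreachable, and Moschovakis's prewellordering shows $\aleph_{\omega+1} = \bf{\delta^1_3}$ is $\bf{\Pi^1_3}$-reachable; so the conjecture amounts to showing that every proper Wadge initial segment of $\bf{\Delta^1_3}$ falls under the umbrella of the argument for $\bf{\Gamma_1}$.

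First, I would analyze the Wadge hierarchy strictly between $\bf{\Gamma_1}$ and $\bf{\Delta^1_3}$ and try to show that any $\bf{\Gamma} <_w \bf{\Delta^1_3}$ is contained in a pointclass of the form $\xi\text{-}\bf{\Gamma_1}$ for some $\xi < \omega_1$, that is, a bounded difference of envelope sets. Using the description $\bf{\Gamma_1} = \bigcup_{m<\omega} \Game(\omega \cdot m\text{-}\bf{\Pi^1_1})$ from Section \ref{Intro} together with the scale property on $\bf{\Pi^1_2}$, the Steel-Wadge analysis should give such a description, since $\bf{\Sigma^1_3}$ sits directly above the envelope in Wadge rank and the difference hierarchy over $\bf{\Gamma_1}$ stratifies the gap.

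Second, I would adapt the coding of the main lemma in Section 3 to the difference hierarchy. Given a putative sequence $\langle A_\alpha : \alpha < \aleph_{\omega+1} \rangle$ of distinct $\xi\text{-}\bf{\Gamma_1}$-sets, decompose each $A_\alpha$ into its $\xi$ envelope components $A_\alpha^\beta$ for $\beta < \xi$, each lying in $\bf{\Gamma_{1,m(\beta),k(\beta)}}$ for some $(m(\beta),k(\beta)) \in \omega \times \omega$. Then apply the direct-limit coding of Section 3 to each component to obtain codes $S_\alpha^\beta$ inside the direct limit of $\I(M_1(x_0))$, and assemble them into a single code $S_\alpha$ indexed by $\beta < \xi < \omega_1$.

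The main obstacle is obtaining a powerset-cardinality bound analogous to (\ref{powerset smaller than b}) at this higher level. At $\bf{\Gamma_1}$ the bound $|\mathcal{P}(\kappa_\infty)|^{M'_\infty} < \aleph_{\omega+1}$ uses that $\kappa_\infty$ lies strictly below $\bf{\delta^1_3}$ and is not measurable in $M'_\infty$. For difference-hierarchy codes the relevant $\kappa_\infty$ should be the supremum of the strong cardinals relevant to the varying $k(\beta)$, and bounding this supremum strictly below $\aleph_{\omega+1}$ uniformly in the difference length will likely require either a sharpening of Sargsyan's prewellordering analysis in \cite{on_the_pwo}, or a single-shot coding that avoids decomposing along the difference hierarchy altogether. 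I expect this uniformity control to be the main technical difficulty.
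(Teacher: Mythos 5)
This statement is one of the paper's explicitly stated \emph{open problems} (it sits in the final section as a conjecture), so the paper contains no proof to compare yours against, and your text is itself a research plan rather than a proof --- you concede as much in your last paragraph. Beyond the difficulty you acknowledge, however, your first step already fails, and it fails in a way that hides where the real content of the conjecture lies.

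The reduction ``every $\bf{\Gamma} <_w \bf{\Delta^1_3}$ is contained in $\xi$-$\bf{\Gamma_1}$ for some $\xi<\omega_1$'' is false. All of the classes $\xi$-$\bf{\Gamma_1}$ with $\xi$ countable live inside the $\sigma$-algebra generated by $\bf{\Gamma_1}$, and that $\sigma$-algebra is a \emph{proper} Wadge initial segment of $\bf{\Delta^1_3}$: above it, and still strictly below $\bf{\Delta^1_3}$, lies a very long stretch of pointclasses (everything obtained by applying game or real quantifiers over $\bf{\Gamma_1}$, and more generally the whole gap up to the next scaled pointclass $\bf{\Pi^1_3}$). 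Your parenthetical claim that $\bf{\Sigma^1_3}$ sits ``directly above the envelope in Wadge rank'' is the same misconception. Compare one level down: already $\bf{\Gamma_0}=\bigcup_m\Game(\omega\cdot m-\bf{\Pi^1_1})$, i.e.\ passing from $\bf{\Pi^1_1}$ to its envelope requires the game quantifier and not merely differences, and Harrington's theorem below $\bf{\Delta^1_1}$ must handle the full Borel Wadge hierarchy, which vastly outstrips the Hausdorff difference hierarchy over open sets. So even if your Steps 2 and 3 went through (and Step 3 is itself unresolved: the decomposition into components is not canonical, needs the coding lemma to be chosen uniformly in $\alpha$, and multiplies the count in (\ref{powerset smaller than b}) by a power $|P(\kappa_\infty)|^{|\xi|}$ that must still be controlled under $AD$), you would only have covered a small initial segment of the pointclasses the conjecture is about. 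The genuine difficulty is that above $\sigma(\bf{\Gamma_1})$ the mouse-theoretic coding of Section 3 is simply unavailable: a set of Wadge rank beyond the envelope is not of the form $M_1[y,x_0]\models\psi(y,x_0,s_m)$ for any fixed $m$, $k$, and $\Sigma_k$-formula $\psi$, so Fact \ref{strle formula definable} and the definition of the coding sets $S^M_\alpha$ have no analogue, and it is the replacement of that entire apparatus --- not just the cardinality bound --- that constitutes the open problem.
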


The analogous question for $\bf{\Delta_{2n+3}}$ is also open. The computation of $b_{2n+1,m}$ for $n\geq 1$ and $m\in\omega$ remains an important open problem in inner model theory. Sargsyan conjectured in \cite{on_the_pwo} that $b_{2n+1,0} = \bf{\delta^1_{2n+2}}$ for all $n$. But even computing the values of $b_{2n+1,m}$ would still leave other questions about unreachability for pointclasses between $\bf{\Sigma^1_{2n+2}}$ and $\bf{\Delta^1_{2n+3}}$. Theorem \ref{main thm} implies there are exactly $\omega$ cardinals $\lambda$ between $\bf{\delta^1_4}$ and $\bf{\delta^1_6}$ such that $\lambda$ is the least $\bf{\Gamma^3_m}$-unreachable for some $m$.

\begin{conjecture}
    If $\lambda$ is a cardinal between $\bf{\delta^1_4}$ and $\bf{\delta^1_6}$ and $cof(\lambda) >\omega$, then there is a pointclass $\bf{\Gamma}$ such that $\lambda$ is the least $\bf{\Gamma}$-unreachable.
\end{conjecture}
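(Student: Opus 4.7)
The plan is to extend the parametrization of Theorem \ref{main thm} until every cardinal of uncountable cofinality in $(\bf{\delta^1_4}, \bf{\delta^1_6})$ is realized as the minimal $\bf{\Gamma}$-unreachable cardinal for some definable $\bf{\Gamma}$. Theorem \ref{main thm} already produces the countable family $\{b_{3,m,k+1}\}$ of such cardinals inside $(\bf{\delta^1_4}, \bf{\delta^1_5})$, so the remaining work is to fill in the rest of the uncountable-cofinality cardinals on both sides of $\bf{\delta^1_5}$. The two sub-gaps require somewhat different strategies.

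For $\lambda \in (\bf{\delta^1_4}, \bf{\delta^1_5})$ of uncountable cofinality, I would refine $\Gam_{3,m,k}$ by replacing the sequence $s$ of the first $m$ uniform indiscernibles in Definition 1.1 with an arbitrary finite increasing tuple of ordinals $\sigma$ drawn from a suitable definable subset of $\H$. Each such $\sigma$ yields a sub-pointclass $\Gam_{\sigma,k}\subset \Gam_{3,m,k}$; one then replaces $Th^M_k$ by $Th^M_{k,\sigma}$ (the $\Sigma_k$-theory with parameters in $M|\delta_M\cup\sigma$) and runs the extender-algebra coding argument of Section 3 almost verbatim. This should yield that the least $\Gam_{\sigma,k}$-unreachable cardinal is $\pi_{M,\infty}((\kappa^\sigma_M)^+)$, where $\kappa^\sigma_M$ is the least $<\delta_M$-strong cardinal of $M$ with respect to $Th^M_{k+1,\sigma}$. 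A final combinatorial step is then needed to show that as $\sigma$ varies, these projected successor cardinals exhaust the uncountable-cofinality cardinals of $(\bf{\delta^1_4}, \bf{\delta^1_5})$, which I would reduce to the question of which cardinals of $\H$ below $\pi_{M_3,\infty}(\delta_{M_3})$ are definable from finite tuples of ordinals.

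The main obstacle lies in $(\bf{\delta^1_5}, \bf{\delta^1_6})$, where the entire outer pointclass $\Gam_{4,m,k}$ has minimal unreachable pinned at $\bf{\delta^1_6}$ because $\bf{\Delta^1_5}\subset\Gam_{4,m,k}\subset\bf{\Sigma^1_6}$. To realize cardinals strictly below $\bf{\delta^1_6}$ one must instead work inside sub-pointclasses of $\Gam_{4,m,k}$; the candidates I would try are those obtained by restricting the allowed $\Sigma_k$-formulas over $M_3$ to ones whose decoding sees only indiscernibles of $M_3$ lying below a prescribed ordinal of the direct limit. Via the StrLe construction, such a restriction should match a strength condition relative to a \emph{fragment} of $Th^{M_3}_k$, and the proof of the main theorem should adapt to show the minimal unreachable for the restricted pointclass equals the projected successor of the corresponding strength cardinal. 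What makes this the hard part is that no analog of the ladder $\sup_m b_{3,m} = \kappa^1_5$ is presently known in $(\bf{\delta^1_5}, \bf{\delta^1_6})$, so both pinning down the right sub-pointclasses and verifying that their minima exhaust the uncountable-cofinality cardinals there will likely require genuinely new information about the direct limit system on $M_3$ above its first strong cardinal; in particular, one would need a fine-structural replacement for the cofinal sequence that falls out trivially in the first sub-gap from Sargsyan's bound $\sup_m b_{2n+1,m}=\kappa^1_{2n+3}$.
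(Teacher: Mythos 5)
This statement is posed in the paper as an open \emph{conjecture}; the paper offers no proof of it, so there is nothing to compare your attempt against except the surrounding discussion. What you have written is a research program rather than a proof, and you acknowledge as much: both the ``final combinatorial step'' in the first sub-gap and essentially the entire second sub-gap are left as things that ``should'' work or ``will likely require genuinely new information.'' Those are precisely the points where the conjecture is hard, so the proposal does not close the gap between what Theorem \ref{main thm} gives and what the conjecture asserts.

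Two concrete problems. First, in $(\bf{\delta^1_4},\bf{\delta^1_5})$ the family of cardinals realized by Theorem \ref{main thm} is the countable set $\{b_{3,m,k+1}: m,k\in\omega\}$ (the paper notes there are exactly $\omega$ such $\lambda$), whereas the interval $(\bf{\delta^1_4},\bf{\delta^1_5}) = (\aleph_{\omega+2},\aleph_{\omega^{\omega^\omega}+1})$ contains vastly more cardinals of uncountable cofinality; moreover the paper states that even the values of $b_{2n+1,m}$ for $n\geq 1$ are unknown, so one cannot currently verify that any proposed refinement by tuples $\sigma$ of indiscernibles produces new cardinals at all, let alone exhausts the interval. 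Your reduction ``to the question of which cardinals of $\H$ below $\pi_{M_3,\infty}(\delta_{M_3})$ are definable from finite tuples of ordinals'' is itself an open fine-structural problem, not a lemma you can cite. Second, for $(\bf{\delta^1_5},\bf{\delta^1_6})$ your candidate sub-pointclasses are not shown to be reasonable boldface pointclasses, and, as you concede, there is no known analog of $\sup_m b_{2n+1,m}=\kappa^1_{2n+3}$ there to anchor the construction. So the proposal correctly identifies where the difficulty lies but does not constitute a proof; the statement remains open, as the paper intends.
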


\subsection*{Acknowledgements}

The author would like to thank Itay Neeman and Grigor Sargsyan for many helpful conversations. Thanks also to several participants in the 2nd Irvine Conference on Inner Model Theory for raising questions which led to these results.

\printbibliography

\end{document}